\numberwithin{equation}{section}
\DeclareMathSymbol{\leqslant}{\mathalpha}{AMSa}{"36} 
\DeclareMathSymbol{\geqslant}{\mathalpha}{AMSa}{"3E} 
\DeclareMathSymbol{\eset}{\mathalpha}{AMSb}{"3F}     
\renewcommand{\leq}{\;\leqslant\;}                   
\renewcommand{\geq}{\;\geqslant\;}                   
\newcommand{\be}{\begin{equation}}
\def\1{\ifmmode {1\hskip -3pt \rm{I}} \else {\hbox {$1\hskip -3pt \rm{I}$}}\fi}
\newtheorem{Th}{Theorem}[section]
\newtheorem{Le}[Th]{Lemma}
\newtheorem{Pro}[Th]{Proposition}
\newtheorem{Def}[Th]{Definition}
\newtheorem{Rem}{Remark}
\newcommand{\cM}{\ensuremath{\mathcal M}}
\newcommand{\bbL}{{\ensuremath{\mathbb L}} }
\newcommand{\bbR}{{\ensuremath{\mathbb R}} }
\newcommand{\bbT}{{\ensuremath{\mathbb T}} }
\newcommand{\bbZ}{{\ensuremath{\mathbb Z}} }
    \let\d=\delta  \let\e=\varepsilon
 \let\g=\gamma       \let\l=\lambda
          \let\p=\pi  
  \let\s=\sigma \let\t=\tau   
\let\D=\Delta   \let\G=\Gamma   
\let\O=\Omega
\title[A representation formula for large deviations on the 1D torus]
{A representation formula for large deviations rate functionals of
invariant measures on the one dimensional torus. }
\author{A. Faggionato}
\address{Alessandra Faggionato. Dipartimento di Matematica ``G. Castelnuovo", Universit\`a ``La
  Sapienza''. P.le Aldo Moro  2, 00185  Roma, Italy. e--mail:
  faggiona@mat.uniroma1.it}
\author{D. Gabrielli}
 \address{Davide Gabrielli.  Dipartimento di Matematica, Universit\`a
 dell'Aquila,   67100 Coppito, L'Aquila, Italy. e--mail:
 gabriell@univaq.it}
\thanks{Work supported by the grant
 PRIN 20078XYHYS$\underline{\
}$003 and by
 the European Research Council through the ``Advanced
Grant'' PTRELSS 228032}
\begin{document}

\begin{abstract}

We consider a generic diffusion on the 1D torus and give a simple
representation formula for the large deviation rate functional of
its invariant probability measure, in the limit of vanishing noise.
Previously, this rate functional had been characterized  by M.I.\
Freidlin and A.D.\ Wentzell as solution of a rather complex
optimization problem. We discuss this last problem in full
generality and show that it leads to our formula.
We express  the rate
functional by means  of a geometric transformation that,
with a Maxwell-like construction, creates flat regions.

 We then consider  piecewise deterministic Markov
processes on the 1D torus and show that the corresponding large
deviation rate functional for the stationary distribution is
obtained by applying the same transformation. Inspired by this, we
prove a universality result showing that the transformation
generates viscosity solution of stationary Hamilton--Jacobi equation
associated to any Hamiltonian $H$ satisfying suitable weak
conditions.

\medskip

\noindent {\sl Key words}: diffusion, piecewise deterministic Markov
process, invariant measure, large deviations, Hamilton--Jacobi
equation.

\medskip

\noindent {\sl AMS 2000 subject classification}:
82C05 
60J60 
60F10 
\end{abstract}

\maketitle

\section{Introduction}

We consider two different  random dynamical systems on  the one
dimensional torus $\bbT$ that, in suitable regimes, can be thought
of as random perturbations of deterministic dynamical systems. The
first one is a diffusion on $\bbT$, with small noise of intensity
$\e$. This system has an invariant distribution $\mu^\e$, whose
large deviation (LD) functional has been expressed by
  M.I.\
Freidlin and A.D.\ Wentzell     as solution of an optimization
problem \cite{FW}. As discussed in \cite{FW}, already for a very
 simple example of diffusion on $\bbT$ with   velocity field having only three
attractor points, the solution of this optimization problem requires
a rather long procedure.

The second system we consider  is given by a piecewise deterministic
Markov process (PDMP) on $\bbT$: the  state  is described by a
pair $(x, \s)\in \bbT\times \{0,1\} $, the continuous variable $x$
follows a piecewise deterministic dynamics with nonvanishing
$\s$--dependent  velocity field, the discrete variable $\s$ evolves
by an $x$--dependent stochastic jump dynamics and the two resulting
evolutions are fully--coupled. When the jump rates of the discrete
variable are multiplied by a factor $\l$, in the limit $\l\to
+\infty$ the evolution of the continuous variable $x$ is well
approximated by a deterministic dynamical system (cf. \cite{FGR1},
\cite{K}). In \cite{FGR2},  an expression of the probability
distribution  $ \mu^\l$ of the continuous variable $x$ in the steady
state is computed up to a normalization constant. In addition, from
this expression    the LD  functional of $ \mu^\l$ is computed in
the limit of diverging frequency jumps (i.e. $\l \rightarrow
\infty$). The resulting formula is simple and concise.

Although the two models are not similar, we show here that the LD
rate functionals for the measures $\mu^\e$ and $\mu^\l$ share a
common structure. In particular they admit a very simple expression,
that we further investigate. We then show that the optimization
problem of M.I.\ Freidlin and A.D.\ Wentzell  leads indeed to the
same expression, by solving this optimization problem  in the
general case.

For both models the rate functional is given by a geometric
transformation applied to a specific non periodic function. The
result is a periodic function, whose graph differs from the original one
due to new flat regions. The function to be
transformed is model dependent while the transformation is always
the same. In this sense our result is universal. We discuss this
issue in terms of Hamilton--Jacobi equations. More precisely we
discuss the regularity properties of the functions obtained by this
procedure showing that they are viscosity solution of a suitable
class of Hamilton--Jacobi equations.

When the models are reversible the transformation reduces to the
identity. In all the other cases intervals on which the rate
functional is constant appear. This reveals the presence of a phase
transition. This kind of stationary non equilibrium states have a
physical relevance and have been created and studied experimentally
(see for example\cite{GPCCG}).


\section{Models and results}

Without loss of generality, we  think of $\bbT$ as the interval
$[0,1]$ with identification of the extreme points $0$ and $1$.

\subsection{Models}
The first model we consider is a generic diffusion  $\bigl(
X_t^\e\bigr) _{ t\geq 0}$   described by the equation
\begin{equation}\label{agostino}
\dot{X}_t^\e =b( X_t^\e)+ \e \dot{w}_t\,,
\end{equation}
where $b: \bbT\rightarrow \bbR$ is a Lipschitz  continuous  vector
field, $w_t$ is a Wiener process  and $\e$ is a positive parameter.
A detailed analysis of the above diffusion, as well as of diffusions
on generic manifolds $\cM$, in the limit $\e \downarrow 0$ is given
in \cite{FW}.  In the case $\cM=\bbT$, Theorem 4.3 in Section 6.4 of
\cite{FW} under the assumption that the closed set $\{x\in \bbT\,:\,
b(x)=0\}$ has a finite number of connected components gives
\begin{equation}\label{nonnabruna}
\lim _{\e \downarrow 0 } -\e^2 \log \mu^\e (x) =W(x)- \min _{y \in
\bbT} W(y) \,, \qquad x \in \bbT\,,
\end{equation}
where $\mu^\e(x) dx $ denotes the invariant probability measure of
the diffusion \eqref{agostino} and where the continuous function $W$
is described in \cite{FW}  by  a rather complex variational
characterization
 that
we  recall in Section \ref{fufi}.
 The r.h.s. of \eqref{nonnabruna} is the LD  rate functional for $\mu^\e$.
Here and in all the paper we state our large deviations results in
the simple and direct formulation used in \eqref{nonnabruna}. Of
course we mean that $\mu^\e(x)$ is a continuous version of the
density of the invariant measure.

A very simple expression both of the invariant measure and of the LD
rate functional can be given. To this aim, in the formulas below  we
will  think  of the  field $b(\cdot)$ also as a periodic function on
$\bbR$, with periodicity $1$. With this convention and without
requiring that the set $\{b=0\}$ has a finite number of connected
components, we get:
 \begin{Pro}\label{pierpi}
Define the function $S:\bbR \rightarrow \bbR$ as $ S(x)= -2\int _0
^x b(s) ds$.
 Then,
\begin{equation}
\mu^\epsilon(x)=\frac{1}{c(\epsilon)}
\int_x^{x+1}e^{\epsilon^{-2}\left(S(y)-S(x)\right)}\,dy\,,\label{keyformula}
\end{equation}
where $c(\e)$ is the normalization constant \begin{equation}
c(\epsilon)=
 \int_0^1dx \int_x^{x+1}
e^{\epsilon^{-2}\left(S(y)-S(x)\right)}dy \,.\label{keyformula_zac}
\end{equation}
In particular, it holds
\begin{eqnarray}
\lim _{\e \downarrow 0 } -\e^2\log \mu^\e (x) &=&\max_{x'\in [0,1]}
\max _{y'\in [x',x'+1]} \bigl(S(y')-S(x') \bigr) -  \max_{y \in
[x,x+1]}\bigl( S(y)-S(x) \bigr)\nonumber \\
&=&\min_{y \in [x,x+1]}\bigl( S(x)-S(y) \bigr)-\min_{x'\in [0,1]}
\min _{y'\in [x',x'+1]} \bigl(S(x')-S(y')
\bigr)\,.\label{nonnabrunabis}
\end{eqnarray}
\end{Pro}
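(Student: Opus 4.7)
The plan is in two independent parts. First, I would establish the explicit formula \eqref{keyformula} by directly solving the stationary Fokker--Planck equation. The generator of \eqref{agostino} is $L=b\,\partial_x+(\e^2/2)\,\partial_x^2$, and the density $\mu^\e$ must satisfy $L^*\mu^\e=0$ on the torus. Integrating once yields the linear ODE
\begin{equation*}
\tfrac{\e^2}{2}(\mu^\e)'-b\,\mu^\e=J_\e,
\end{equation*}
where $J_\e\in\bbR$ is the (a priori unknown) stationary current. Since $S'=-2b$, the integrating factor $e^{S/\e^2}$ turns this into $(e^{S/\e^2}\mu^\e)'=(2J_\e/\e^2)\,e^{S/\e^2}$, which integrates in closed form. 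The periodicity condition $\mu^\e(0)=\mu^\e(1)$ together with the normalization $\int_0^1\mu^\e=1$ determine both $J_\e$ and the integration constant uniquely, and a direct computation shows that the resulting density is exactly the ansatz \eqref{keyformula}--\eqref{keyformula_zac}. A useful identity here is $S(x+1)-S(x)=-2\int_0^1 b(s)\,\dd s=S(1)$, which follows from the $1$-periodicity of $b$ and implies both that the right-hand side of \eqref{keyformula} is manifestly periodic in $x$ and that plugging it into the ODE produces the $x$-independent current $J_\e=(\e^2/2c(\e))\bigl(e^{S(1)/\e^2}-1\bigr)$.

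Once \eqref{keyformula} is in hand, the large-deviation asymptotics \eqref{nonnabrunabis} follow from Laplace's method. The joint continuity of the phase $(x,y)\mapsto S(y)-S(x)$ on the compact square $[0,1]^2$ yields, uniformly in $x\in[0,1]$,
\begin{equation*}
\e^2\log\int_x^{x+1}e^{\e^{-2}(S(y)-S(x))}\,\dd y\;\longrightarrow\;\max_{y\in[x,x+1]}\bigl(S(y)-S(x)\bigr),
\end{equation*}
and the same principle applied to the double integral in \eqref{keyformula_zac} gives
\begin{equation*}
\e^2\log c(\e)\;\longrightarrow\;\max_{x'\in[0,1]}\,\max_{y'\in[x',x'+1]}\bigl(S(y')-S(x')\bigr).
\end{equation*}
Taking $-\e^2\log\mu^\e(x)$ and subtracting the two limits produces the first line of \eqref{nonnabrunabis}; the second line is obtained merely by rewriting $\max_y(S(y)-S(x))=-\min_y(S(x)-S(y))$ in both terms.

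I do not expect any serious obstacle. The first part is essentially a one-variable linear ODE calculation, and the second part is standard Laplace asymptotics with continuous phase on a compact set. What is worth emphasizing is that neither step uses any hypothesis on the structure of the zero set $\{b=0\}$: only the Lipschitz continuity of $b$, which is what guarantees continuity of $S$ and well-posedness of \eqref{agostino}. This is precisely the gain over the variational Freidlin--Wentzell characterization recalled in Section \ref{fufi}, where the assumption that $\{b=0\}$ has finitely many connected components is essential.
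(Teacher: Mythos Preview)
Your proposal is correct and follows essentially the same approach as the paper: verify \eqref{keyformula} by direct computation (you spell out the Fokker--Planck ODE and the integrating factor, whereas the paper simply says ``a direct computation'' and cites \cite{MNW}), and then obtain \eqref{nonnabrunabis} by Laplace's method, which is exactly what the paper means by ``a direct application of the Laplace Theorem''. Your additional remarks on the explicit current $J_\e$ and on the irrelevance of the structure of $\{b=0\}$ are accurate and in the spirit of the text.
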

Note that  $S(a+1)-S(a)=- \int _0^1 2 b(s) ds$, and in particular
the difference  does not depend on $a\in \bbR$. Hence, given $x \in
\bbT$,  the expression $\max _{y \in [  x,   x+1]} \bigl(S(y)-S(
x)\bigr)$ does not
  generate any confusion, both if we think $x
\in \bbT \hookrightarrow \bbR$ by identifying $\bbT$ with $[0,1)$,
and if we think $\max _{y \in [  x,   x+1]}\bigl( S(y)-S(  x)\bigr)$
as $\max _{y \in [ \bar x, \bar x+1]} \bigl(S(y)-S( \bar  x)\bigr) $
with $\bar x \in \bbR$ such that $ \p (\bar x )=x$, where $\p:
\bbR\rightarrow \bbT$ denotes the canonical projection.  In what
follows, when writing $\max _{y \in [ x, x+1]} \bigl(S(y)-S(
x)\bigr)$ with $x \in \bbT$ we will mean any of the above
interpretations. The same considerations hold if we consider the
minimum instead of the maximum.

\begin{proof}[Proof of Proposition  \ref{pierpi}]
The proof is elementary. The fact that \eqref{keyformula} is the
density of the invariant measure follows by a direct computation.
See for example \cite{MNW} where a similar expression has been
obtained. Then \eqref{nonnabrunabis} follows from \eqref{keyformula}
and \eqref{keyformula_zac} by a direct application of the Laplace
Theorem \cite{FW}.
\end{proof}

 As a consequence of Proposition \ref{pierpi}, the r.h.s. of
\eqref{nonnabrunabis} coincides with the r.h.s. of
\eqref{nonnabruna}, where we recall that the function $W$ is
characterized as the solution of the Freidlin-Wentzell variational
problem. This fact is not evident. The general solution of the
variational problem determining $W$ on the 1D torus is described in
detail in Theorem \ref{ruspabis}, which is stated only  in Section
\ref{otto} after introducing some preliminaries. Its proof is given
in Section \ref{fatica} and  is independent from Proposition
\ref{pierpi}. The identification of $W$ with the r.h.s. of
\eqref{nonnabrunabis} is stated in Theorem \ref{ruspa}.

\medskip

The second model we discuss is a PDMP on the 1D torus $\mathbb T$.
Let $F_0,F_1: \bbT \rightarrow \bbR$ be Lipschitz continuous
fields. In addition, let $r(0,1|\cdot), r(1,0|\cdot)$ be positive
continuous functions on $\bbT$. Given the parameter $\l>0$, we
denote by $\{(X^\l_t, \s^\l_t)\,:\, t\geq 0\}$ the stochastic
process with states in $\bbT\times \{0,1\}$ whose generator is given
by
\begin{equation} \bbL_\l f (x,\s)= F_\sigma(x)\cdot \nabla
f(x,\sigma)+\l
r(\sigma,1-\sigma|x)\left(f(x,1-\sigma)-f(x,\sigma)\right)\,,
\end{equation}
for all $ (x, \s) \in \bbT \times \{0,1\}$. The above process is a
generic PDMP on the torus $\bbT$ (cf. \cite{D} for a detailed
discussion on PDMPs). Following \cite{FGR1}, \cite{FGR2}, we call
$x$ and $\s$  the mechanical and the chemical state of the system,
respectively. The dynamics can be roughly described as follows.
Given the initial state $(x_0,\sigma_0)\in \O\times \G$, consider
the positive  random variable $\tau_1$ with distribution
\begin{equation*}
\mathbb P(\tau_1> t)=
e^{-\lambda\int_0^t r(\sigma_0,1-\sigma_0|x_0(s))ds}\,, \qquad  t\geq 0\,,
\end{equation*}
where  $x_0(s)$ is the solution of the Cauchy problem
\begin{equation}
\left\{
\begin{array}{l}
\dot{x}= F_{\sigma_0}(x)\,, \\
x(0)=x_0 \,.
\end{array}
\right.\label{Cauchy}
\end{equation}
The evolution of the system  in the time interval $[0,\tau_1)$ is
given by $(x_0(s),\sigma_0)$. At time $\tau_1$ the chemical state
changes, i.e. $\sigma^\l _{\tau_1}= 1- \sigma _0$, and the dynamics
starts afresh from the state $(x(\tau_1), \sigma^\l _{\tau_1}  )$.
Note that the mechanical trajectory is continuous and piecewise
$C^1$. Moreover, if the jump rates $r(0,1|x)$ and $r(1,0|x) $ do not
depend on $x$, then the process $(\s_t^\l \,:\, t \geq 0)$ reduces
to a continuous--time Markov chain with jump rates $r(0,1), r(1,0)$,
independent from $(X^\l_t\,:\, t \geq 0 ) $. In general, the
chemical and the mechanical evolutions are fully--coupled.

In \cite{FGR1} (see also \cite{FGR2}) an averaging principle has
been proved. In the limit of high frequency of the chemical jumps (i.e. $\l\to \infty$),
the mechanical variable $x$ behaves deterministically according to an ODE with a
suitable averaged vector field $\bar F$. In this sense a PDMP can be
thought of as a stochastic perturbation of the deterministic
dynamical system $\dot x=\bar F(x)$.

For simplicity, we restrict to the case of non vanishing force
fields $F_0,F_1$. Then for each $\l>0$, there exists a unique
invariant measure $\tilde \mu^\l $ for the PDMP $( X^\l_t , \s
^\l_t)$ and it has the form   $\tilde \mu^\l(x,\sigma)= \tilde
\mu^\l_0 (x) dx \d_{\s,0}+\tilde \mu^\l _1 (x) dx \d _{\s,1}$, where
$\d$ is the Kronecker delta (cf. Theorem (34.19), p.118 and Theorem
3.10, p.130 in \cite{D}[Section 34] together with \cite{FGR2}). Let
us observe now the evolution of the mechanical state alone in the
steady state. We set
$$   \mu^\l (x)=\tilde  \mu^\l _0 (x)+\tilde \mu^\l _1(x) $$
for the probability density at $x$ of the mechanical variable in the
steady state. The following result holds \cite{FGR2}:
\begin{Pro}\label{pepo} Suppose that $F_0$ and $F_1$ are Lipschitz continuous
non vanishing   fields and define the function $S:\bbR\to \mathbb R$
as
\begin{equation}S(x)= \int _0^ x \left(\frac{r(0,1|y)}{F_0(y)} +
\frac{ r(1,0|y)}{F_1(y)} \right) dy\,.
\end{equation}
Then \begin{equation}   \mu ^\l (x)=\frac{1}{Z^\l} \int_x^{x+1}
\Big[ \frac{r(1,0|y)}{ F_0(x)F_1(y) }+ \frac{ r(0,1|y) }{
F_1(x)F_0(y) } \Big] e^{\l \bigl( S(y)-S(x) \bigr) } dy\,,
\end{equation}
where $Z^\l$ denotes the normalization constant.  In particular,  it
holds
\begin{equation}
\begin{split}
\lim _{\l \uparrow \infty}- \frac{1}{\l}\ln \mu^\l (x)&=
 \max_{x'\in [0,1]} \max _{y'\in [x',x'+1]} \bigl(S(y')-S(x') \bigr)
-  \max_{y \in [x,x+1]}\bigl( S(y)-S(x) \bigr) \\
&=\min_{y \in [x,x+1]}\bigl( S(x)-S(y) \bigr)-\min_{x'\in [0,1]}
\min _{y'\in [x',x'+1]} \bigl(S(x')-S(y') \bigr)\,. \text{$ $ }\label{nonnobile}
\end{split}
\end{equation}
\end{Pro}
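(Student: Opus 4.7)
The plan is to determine the joint invariant density $(\tilde\mu^\l_0,\tilde\mu^\l_1)$ explicitly by solving the adjoint stationarity equation, then sum the two components to get $\mu^\l$, and finally extract the LD asymptotics by Laplace's method on the resulting integral representation.

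The first step is to write down $\mathbb L_\l^\ast \tilde\mu^\l=0$. Using the product form $\tilde\mu^\l(x,\sigma)=\tilde\mu^\l_\sigma(x)\,dx$, the adjoint equation becomes the coupled system
\begin{equation*}
\partial_x\bigl(F_0\tilde\mu^\l_0\bigr)=\l\bigl(-r(0,1|x)\tilde\mu^\l_0+r(1,0|x)\tilde\mu^\l_1\bigr),\qquad
\partial_x\bigl(F_1\tilde\mu^\l_1\bigr)=\l\bigl(r(0,1|x)\tilde\mu^\l_0-r(1,0|x)\tilde\mu^\l_1\bigr).
\end{equation*}
Adding the two equations gives $\partial_x(F_0\tilde\mu^\l_0+F_1\tilde\mu^\l_1)=0$, so there is a stationary current $J^\l\in\mathbb R$ with $F_0\tilde\mu^\l_0+F_1\tilde\mu^\l_1\equiv J^\l$. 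Substituting $\tilde\mu^\l_1=(J^\l-F_0\tilde\mu^\l_0)/F_1$ into the first equation and setting $g(x):=F_0(x)\tilde\mu^\l_0(x)$, one obtains the scalar linear ODE
\begin{equation*}
g'(x)+\l\,S'(x)\,g(x)=\l\,J^\l\,\frac{r(1,0|x)}{F_1(x)},
\end{equation*}
with $S'=r(0,1|\cdot)/F_0+r(1,0|\cdot)/F_1$ exactly as in the statement.

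The next step is to solve this ODE by the integrating factor $e^{\l S(x)}$ and then impose periodicity $g(x+1)=g(x)$ (which follows from $\tilde\mu^\l_\sigma$ and $F_\sigma$ being periodic) to determine the integration constant; because $S(x+1)-S(x)=S(1)$ is a positive constant independent of $x$, the periodicity condition gives an explicit formula for $g$ as $\l J^\l$ times an integral of $r(1,0|y)e^{\l(S(y)-S(x))}/F_1(y)$ over $[x,x+1]$, divided by $e^{\l S(1)}-1$. The analogous computation (or the symmetric manipulation using $h:=F_1\tilde\mu^\l_1$) yields a similar expression for $\tilde\mu^\l_1$. Summing $\tilde\mu^\l_0(x)+\tilde\mu^\l_1(x)=g(x)/F_0(x)+(J^\l-g(x))/(F_0(x))$ and collecting the two resulting terms delivers the bracketed expression in the statement; $Z^\l$ is then the normalization absorbing the factor $\l J^\l/(e^{\l S(1)}-1)$ and fixed by $\int_0^1\mu^\l(x)\,dx=1$.

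Once the representation for $\mu^\l(x)$ is established, the large deviation statement is a direct application of Laplace's method. Fix $x\in\mathbb T$: the integrand of the numerator is bounded above and below by positive constants times $e^{\l(S(y)-S(x))}$ (the prefactor in brackets is continuous and strictly positive under the non-vanishing assumption on $F_0,F_1$), so
\begin{equation*}
\lim_{\l\to\infty}\frac{1}{\l}\log\int_x^{x+1}\!\!\Bigl[\tfrac{r(1,0|y)}{F_0(x)F_1(y)}+\tfrac{r(0,1|y)}{F_1(x)F_0(y)}\Bigr]e^{\l(S(y)-S(x))}dy=\max_{y\in[x,x+1]}\bigl(S(y)-S(x)\bigr).
\end{equation*}
Applying the same argument to $Z^\l=\int_0^1\!dx\int_x^{x+1}\!\cdots$ yields $\lim\l^{-1}\log Z^\l=\max_{x'}\max_{y'\in[x',x'+1]}(S(y')-S(x'))$, and subtracting gives the first equality in \eqref{nonnobile}; the second equality is a trivial rewriting using that $S(y)-S(x)$ over $[x,x+1]$ is periodic in $x$.

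The main technical point is not the Laplace step but the bookkeeping in the ODE solution: one must check that the boundary/periodicity condition gives a \emph{consistent} determination of $J^\l$ and the integration constant (in particular that $J^\l\neq 0$ under the non-vanishing hypothesis on $F_0,F_1$, so that both components are well-defined probability densities), and one must verify that the two resulting formulas obtained from the symmetric reductions (one solving for $g=F_0\tilde\mu^\l_0$, the other for $h=F_1\tilde\mu^\l_1$) combine to the clean symmetric bracket in the statement.
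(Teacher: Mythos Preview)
The paper does not actually prove this proposition: it states the result and refers the reader to \cite{FGR2}, so there is no in-paper argument to compare against. Your outline---solve the stationary Fokker--Planck system by reducing to a scalar linear ODE via the conserved current $J^\l$, integrate with the factor $e^{\l S}$, fix the constant by periodicity, sum the components, then apply Laplace---is exactly the standard route and is essentially correct.

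Two minor inaccuracies to clean up. First, $S(1)=\int_0^1\bigl(r(0,1|y)/F_0(y)+r(1,0|y)/F_1(y)\bigr)dy$ need not be positive: the rates are positive but $F_0,F_1$ are only assumed nonvanishing, so each term has a fixed sign that can be negative. What you actually use is $S(1)\neq 0$ (so that $e^{\l S(1)}-1\neq 0$), and when $S(1)=0$ the periodicity condition forces $J^\l=0$ and the analysis factorizes differently; either way the final displayed formula with the normalization $Z^\l$ survives. Second, the bracket $\frac{r(1,0|y)}{F_0(x)F_1(y)}+\frac{r(0,1|y)}{F_1(x)F_0(y)}$ is not strictly positive in general: if $F_0$ and $F_1$ have opposite signs it is strictly negative. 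What matters for Laplace is that it has \emph{constant} sign and is bounded away from zero on the compact domain, which does hold since each nonvanishing $F_\sigma$ keeps a fixed sign on $\bbT$. Finally, there is a typo in your sum: $\tilde\mu^\l_0+\tilde\mu^\l_1=g/F_0+(J^\l-g)/F_1$, not $(J^\l-g)/F_0$.
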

The analogy with Proposition \ref{pierpi} is evident. We refer the
reader to \cite{FGR2} for a proof of the above results.

\subsection{Common geometric structure of the LD functionals.}

We can finally describe the common structure behind the LD
functionals of $\mu^\e$ and $\mu^\l$:

\begin{Th}\label{ruspa} Given a continuous  function  $F: \bbT
\rightarrow \bbR$, define for all $x \in \bbR $
\begin{align}
& S(x) = \int _0 ^x F(s) ds  \,,\\
&  \Phi (x)= -\max_{y \in [x,x+1]} \bigl( S(y)-S(x) \bigr)= \min _{y
\in [x,x+1]} \bigl( S(x) - S(y) \bigr)\,.
\end{align}
Then the following holds:

\begin{itemize}

\item [ (i)] $\Phi$ is Lipschitz continuous  and periodic with unit period.
If $S$ is monotone, then $\Phi $ is constant and equals
$\min\left\{0,-S(1)\right\}$.
 If
 $S(1) =0$, then $S$ is periodic and $\Phi= S$ up to an additive constant. If
 $ S(1) \not = 0$ and $S$ is not monotone, then the set
 \begin{equation}\label{urca}
U= \{ x \in \bbR\,:\, \Phi(x)\not = \min \{0,-S(1)\} \}
 \end{equation}
is an open subset $U
 \subset \bbR$ such that  $\bbR\setminus U$ has nonempty interior
 part.  On each connected component of $U$ it holds   $\Phi=S$ up to an
 additive constant,
 i.e.
\begin{equation}\label{gattone} \Phi (x) = S(x) - S(a)+ \Phi (a)\,, \qquad \forall x \in (a,b)
 \subset U\,.
\end{equation}
 On $\bbR\setminus U$ the function  $\Phi$ is constant and satisfies
 \begin{equation}\label{patroclo}
 \Phi(x)= \min \{0,- S(1) \}\,,
 \qquad \forall x \in \bbR \setminus U\,.
 \end{equation}
 Moreover, $\Phi$ reaches its maximum on $\bbR\setminus U$:
 \begin{equation}\label{ciampi}
\max_{x\in [0,\infty) }\Phi(x)=\max _{x\in [0,1]}\Phi(x)=\min \{0,
-S(1) \}\,.
 \end{equation}

\item[
(ii)]  Suppose that the set $\{ x \in \bbT\,:\, F(x)=0\}$ has a
finite number of connected components. Then,
\begin{equation}\label{ziofabri}
\Phi (x) - \min_{x' \in [0,1]} \Phi (x')  = W(x)- \min_{y \in
\mathbb T} W(y)
\end{equation}
where $W$ is the function entering in \eqref{nonnabruna} and defined
by an optimization problem in \cite{FW}, taking $b(x)=-F(x)/2$.
\end{itemize}
\end{Th}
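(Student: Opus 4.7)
The plan is to prove part~(i) by direct geometric analysis of the function $\Phi$, writing it as $\Phi(x)=S(x)-M(x)$ with $M(x):=\max_{y\in[x,x+1]} S(y)$, and then to deduce part~(ii) by comparing the explicit structure of $\Phi$ established in part~(i) with the solution of the Freidlin--Wentzell variational problem provided in Theorem~\ref{ruspabis}.

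For part~(i), periodicity of $\Phi$ follows because $F$ is $1$-periodic, so $S(y+1)-S(x+1)=S(y)-S(x)$ and hence $M(x+1)-S(x+1)=M(x)-S(x)$. Lipschitz continuity is immediate: $S$ is Lipschitz with constant $\|F\|_\infty$ and the sliding-window maximum $M$ of any Lipschitz function shares the same constant, so $\Phi$ is Lipschitz with constant at most $2\|F\|_\infty$. Testing $y=x$ and $y=x+1$ in the definition of $M$ gives $M(x)\geq \max\{S(x),S(x)+S(1)\}$, hence the universal bound $\Phi(x)\leq \min\{0,-S(1)\}$. If $S$ is monotone, one of these two choices of $y$ attains the maximum for every $x$, yielding $\Phi\equiv \min\{0,-S(1)\}$. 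If $S(1)=0$, then $S$ is $1$-periodic, $M\equiv \max_{[0,1]}S$, and $\Phi$ equals $S$ up to the additive constant $\max_{[0,1]}S$.

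Assume now $S(1)\neq 0$ and $S$ non-monotone. Openness of $U$ is clear from continuity of $\Phi$. For $x_0\in U$ one has $M(x_0)>\max\{S(x_0),S(x_0+1)\}$, so every maximizer of $S$ on $[x_0,x_0+1]$ lies in the open interval $(x_0,x_0+1)$; by continuity of $S$ the same maximizer remains admissible and still beats both endpoints for all $x$ in a neighborhood of $x_0$, so $M$ is locally constant on $U$, and by connectedness constant on each of its components, yielding \eqref{gattone}. Equation \eqref{patroclo} is the definition of $\bbR\setminus U$, and the maximum identity \eqref{ciampi} follows once one knows $\bbR\setminus U$ is nonempty. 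That $\bbR\setminus U$ has nonempty \emph{interior} is the most delicate point: without loss of generality take $S(1)>0$, choose a rightmost global maximizer $z^\ast$ of $S$ on some interval $[k,k+1]$, and check using the continuity of $F$ together with the non-monotonicity of $S$ that the condition ``the maximum of $S$ on $[x,x+1]$ is attained at the right endpoint $x+1$'' persists on an open interval of values of $x$ just to the left of $z^\ast$.

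For part~(ii), the hypothesis that $\{F=0\}$ has finitely many connected components is exactly what Theorem~\ref{ruspabis} requires in order to produce an explicit description of the solution $W$ of the Freidlin--Wentzell variational problem on $\bbT$. That description exhibits $W$ as a continuous periodic function coinciding with $S$ up to an additive constant on an open set and constant on its complement, with the maximum realized on the complement, i.e.\ with exactly the structure derived for $\Phi$ in part~(i). Since both $\Phi-\min\Phi$ and $W-\min W$ are uniquely determined by this structure, a direct comparison yields \eqref{ziofabri}. The main obstacle is therefore part~(i): establishing that on each component of $U$ the sliding maximum $M$ is locally constant, and that $\bbR\setminus U$ has nonempty interior; both require careful tracking of the argmax set as $x$ varies near the critical points of $S$. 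Part~(ii) is then essentially a matter of matching the structural description of $\Phi$ from part~(i) to the explicit form of $W$ given by Theorem~\ref{ruspabis}.
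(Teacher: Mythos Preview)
Your treatment of part~(i) is close to the paper's in spirit, though you use a direct ``the maximizer stays in the open window'' argument to show $M$ is locally constant on $U$, whereas the paper observes that on $U$ the value $M(x)$ must lie in the countable set $\{S(A_j)\}$ of local-maximum levels of $S$, so $M$ (being continuous) is constant on each component. Your version is fine once you note that the argument is symmetric: for nearby $x,x'\in U$, each one's interior maximizer is admissible for the other, forcing $M(x)=M(x')$. Your sketch for the nonempty interior of $\bbR\setminus U$ is plausible but incomplete; the paper argues instead via absolute continuity of $\Phi$ and $\int_0^1\nabla\Phi=0\ne S(1)$, which is a different route.

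The genuine gap is in part~(ii). You write that Theorem~\ref{ruspabis} ``exhibits $W$ as a continuous periodic function coinciding with $S$ up to an additive constant on an open set and constant on its complement, with the maximum realized on the complement,'' and then claim that since $\Phi$ has the same structure, the two must agree up to a constant. This uniqueness claim is false: many continuous periodic functions share that qualitative structure (for different open sets $O$ and different constants on each component), and nothing forces the open set for $W$ to equal the set $U$ for $\Phi$. Theorem~\ref{ruspabis} gives explicit formulas for $W(K_i)$ and for $W(x)$ in terms of $V_\pm$, but turning that into the identity $W(x)-\Phi(x)=\text{const}$ is the content of Proposition~\ref{biancaneve}, whose proof occupies Lemmas~\ref{cuore}--\ref{bici_wander} and involves real computation: one first shows $W(K_i)+\max_{y\in[x_i,x_i+1]}(S(y)-S(x_i))=\int_0^1 F_+$ using the graph $g_{i,J}$ (Lemma~\ref{dondolo}), then establishes the matching inequality and equality cases for general $x$ (Lemmas~\ref{nebbia} and~\ref{bici_wander}). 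The ``direct comparison'' you allude to is precisely this work, and it cannot be replaced by a structural uniqueness principle.
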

The  map from $S$ to $\Phi$ is the geometric transformation
mentioned in the Introduction.

\begin{Rem}
It is interesting to observe that if we define
\begin{equation}
\tilde{\Phi}(x):=\inf_{y\in[x,x+1]}\left(\int_x^{y}F_-(s)ds+\int_{y}^{x+1}
F_+(s)ds
 \right)\,, \label{gelmini}
\end{equation}
then $\tilde \Phi$ and $\Phi$ differ by a constant. In fact, we have
\begin{eqnarray}
& &\Phi(x)=\min_{y\in[x,x+1]}(S(x)-S(y))=\min_{y\in[x,x+1]}\left(\int_x^yF_-(s)ds-\int_x^yF_+(s)ds\right)\nonumber \\
& &=\min_{y\in[x,x+1]}\left(\int_x^yF_-(s)ds+
\int_y^{x+1}F_+(s)ds\right)-\int_0^1F_+(s)ds\nonumber \\
& &=\tilde{\Phi}(x)-\int_0^1F_+(s)ds\,.\nonumber
\end{eqnarray}
Indeed, comparing with \eqref{pisolo} in Proposition
\ref{biancaneve}, one gets $\tilde{\Phi}= W$ under the assumptions
of Theorem \ref{ruspa} (ii).
\end{Rem}

Theorem \ref{ruspa} suggests  a simple algorithm  to determine the
graph of the function $\Phi$. To avoid trivial cases, we assume that
$S(1)\not= 0$ and that  $S$ is not monotone. For simplicity of
notation we suppose that the set $\{F=0\}$ has finite cardinality.

\noindent $\bullet$ {\em Case   $S(1)>0$}.
 We can always find a point of
local maximum $b$ such that $S(b+1)= \max _{y \in [b,b+1]} S(y)$.
This point $b$ can be found as follows: let $a\in \bbR $ be any
point of local maximum  for $S$, then let  $b \in [a,a+1]$ be such
that $S(b) = \max _{y \in [a,a+1]} S(y)$. This trivially implies
that
$$ S(b+1)=S(1)+S(b) \geq
\begin{cases}
 S(1)+ \max _{y \in [b,a+1]} S(y)\,,\\
 S(1)+ \max_{y \in [a,b]} S(y)= \max _{y \in [a+1,b+1]} S(y)\,.
\end{cases}
$$
The above inequalities imply that $S(b+1) = \max _{y \in [b,b+1]}
S(y)$. If $b\in (a,a+1)$ then trivially $b$ is a point of local
maximum for $S$. Otherwise, it must be $b=a+1$ (since
$S(a+1)=S(a)+S(1)>S(a)$) and in particular $b$ is again a point of
local maximum for $S$ since $a$ and therefore $a+1$ satisfy this
property.

 The following algorithm shows how to construct the function
$\Phi$ on the interval $[b,b+1]$. Due to the periodicity of $\Phi$
this construction extends to all $\mathbb R$.

 Let $\G=\{x_i: 1\leq i \leq m  \}$ be the
set of points of local maximum for $S$  in $[b,b+1]$ (note that
$b,b+1 \in \G$). Let $ L_1 = S( b+1)$, $ z_1= \min \{ x \in \G \,:\,
S(x)= L_1\}$, and define inductively
$$L_k = \max \{ S(x): x \in \G ,
x<z_{k-1} \}\,, \qquad z_k =\min \{ x \in \G\,:\, S(x)= L_k\} $$ for
all $k\geq 2 $ such that the set  $\{ S(x): x \in \G , x<z_{k-1} \}$
is nonempty. At the end, we get $n$ levels $L_1, \dots, L_n$ and
points $z_n<z_{n-1} < \cdots < z_1$, which are all points of local
maximum for $S$ on $[b,b+1]$. In addition, it must be $z_n=b$. For
each $i:1\leq i <n$ let $y_i$ be the point
$$ y_i= \max \{x \in [b, z_i]\,:\, S(x) = L_{i+1}\}
\,.
$$
Then the continuous function $\Phi$ on $[b,b+1]$ is obtained  by the
following rules. Fix the value $\Phi(b)=-S(1)$. Set $V= \cup
_{i=1}^{n-1} [y_i,z_i]$, then the function $\Phi$ must be equal to
the constant $-S(1)$ on $V$ and must satisfy $\nabla\Phi= \nabla S$
on $[b,b+1] \setminus V$. See figure \ref{mayo1000}.

We do not give a formal proof of the above algorithm, it can be
easily obtained for example  from the following alternative
representation of the function $\Phi$ when $x$ belongs to the
special unitary period $[b,b+1]$
\begin{eqnarray}
& &\Phi(x)=\min_{y\in[x,x+1]}(S(x)-S(y))=S(x)-\max_{y\in[x,x+1]}S(y)\nonumber\\
&
&=S(x)-\max_{y\in[b+1,x+1]}S(y)=S(x)-S(1)-\max_{y\in[b,x]}S(y)\,.\label{fb}
\end{eqnarray}
Note that in the last term we are maximizing only over the interval
$[b,x]$.
The
algorithm follows easily.

The algorithm can be summarized by the following simple and
intuitive procedure. Think of  the graph of $S$ in Figure
\ref{mayo1000} as a mountain profile and imagine also that light is
arriving from the left with rays parallel to the horizontal axis. On
a point $x$ such that the corresponding point on the mountain
profile is in the shadow we have $\nabla \Phi=\nabla S$. On a point
$x$ such that the corresponding point on the mountain profile is
enlightened, we have $\nabla \Phi=0$. Note in particular that flat
intervals are always to the left of some local maxima.

\begin{figure}[!ht]
    \begin{center}
       \psfrag{a}[l][l]{$b$}
      \psfrag{g}[l][l]{$z_1=b+1$}
       \psfrag{m}[l][l]{$L_1$}
 \psfrag{zs}[l][l]{$L_2$}
  \psfrag{v}[l][l]{$L_3$}
 \psfrag{u}[l][l]{$L_4=0$}
 \psfrag{f}[l][l]{$y_1$}
 \psfrag{e}[l][l]{$z_2$} \psfrag{d}[l][l]{$y_2$}
 \psfrag{c}[l][l]{$z_3$} \psfrag{b}[l][l]{$y_3$}
\psfrag{a}[l][l]{$z_4=b$}
\psfrag{R}[l][l]{$-S(1)$}
\psfrag{U}[l][l]{Function $S$}
\psfrag{V}[l][l]{Function $\Phi$}
 \includegraphics[width=11cm]{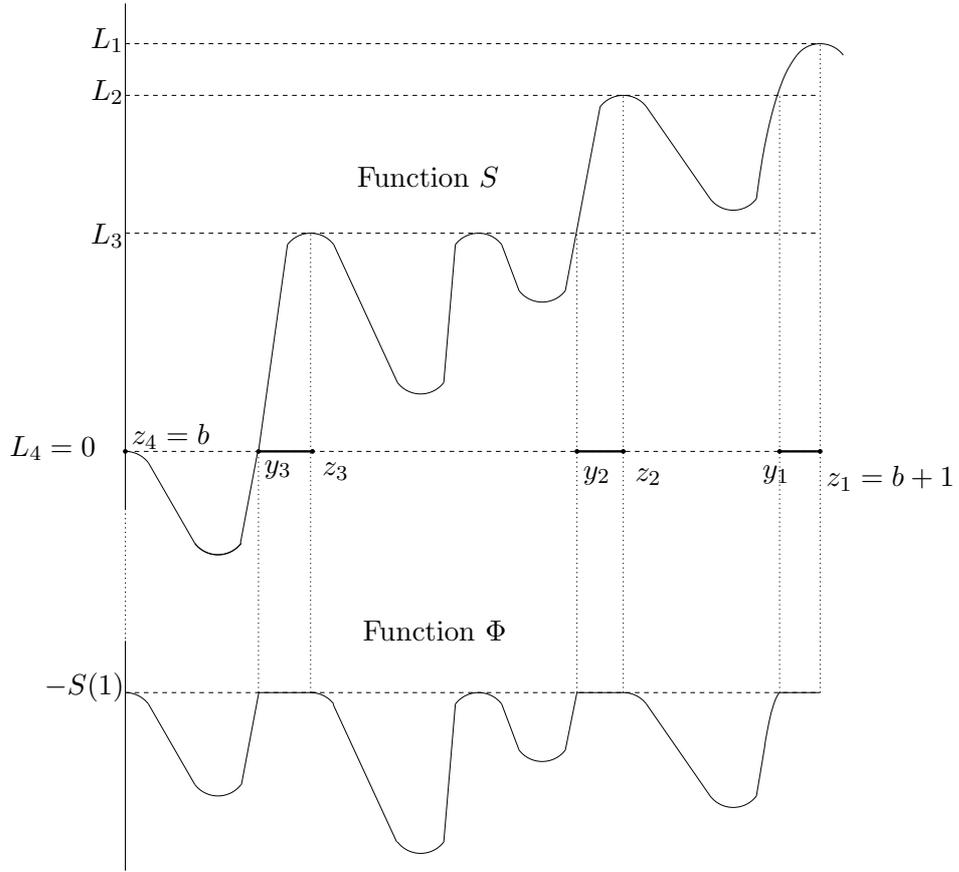}
      \caption{Algorithm to determine the flat regions  and the function $\Phi$: case   $S(1)>0$.}
    \label{mayo1000}
    \end{center}
 \end{figure}

\noindent $\bullet$ {\em Case  $S(1)<0$}.  The algorithm in this
case is similar to the case $S(1)>0$, it is enough to  inverte left
with right. More precisely, first one determines a point of local
maximum $b$ such that $S(b)= \max _{y \in [b,b+1]} S(y)$. One
defines $\G$ as above, let $ L_1 = S( b)$, $ z_1= \max \{ x \in \G
\,:\, S(x)= L_1\}$, and define inductively
$$L_k = \max \{ S(x): x \in \G ,
x>z_{k-1} \}\,, \qquad z_k =\max  \{ x \in \G\,:\, S(x)= L_k\} $$
for all $k\geq 2 $ such that the set  $\{ S(x): x \in \G , x>z_{k-1}
\}$ is nonempty. It must be $z_n=b+1$. For each $i:1\leq i <n$ let
$y_i$ be the point
$$ y_i= \min \{x \in [z_i, b+1]\,:\, S(x) = L_{i+1}\}
\,.
$$
Setting $\Phi(b)=-S(1)$ and   $V=  \cup _{i=1}^{n-1} [z_i,y_i]$, the
conclusion is the same as in the case $S(1)>0$.

Also in this case an interpretation of the algorithm in terms of
mountains and light can be given. The difference is that light is
arriving now from the right. Note also that in this case flats
intervals are always located at the right of some local maxima.

\subsection{Hamilton--Jacobi equations and universality}
Finally, we illustrate the relation of the function
 $\Phi$ with Hamilton-Jacobi equations:

\begin{Th}\label{genteo} Given a continuous function $F:\bbT \to
\bbR$,  let $H(x,p)$, with $(x,p)\in \mathbb T\times \mathbb R$, be
a Hamiltonian such that
\begin{itemize}
\item[(A)] $H(x,\cdot)$ is a convex function for any $x\in \mathbb T$,

\item[(B)]$H(x,0)=H(x,F(x))=0$ for any $x\in \mathbb T$.

\end{itemize}

Then the function $\Phi(x)$  defined in Theorem \ref{ruspa} is a
viscosity solution of the Hamilton-Jacobi equation
\begin{equation}
H(x,\nabla \Phi(x))=0\,, \qquad  x\in \mathbb T\,. \label{HJgen}
\end{equation}
\end{Th}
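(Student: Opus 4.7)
The plan is to combine the detailed structure of $\Phi$ given in Theorem \ref{ruspa}(i) with the two hypotheses on $H$: (B), which asserts that both $p=0$ and $p=F(x)$ solve $H(x,p)=0$, and (A), which makes $H(x,\cdot)$ convex. In the degenerate cases of Theorem \ref{ruspa}(i) (namely $S$ monotone or $S(1)=0$), $\Phi$ is either globally constant, so $\nabla\Phi\equiv 0$, or equal to $S$ up to an additive constant, so $\nabla\Phi\equiv F$; in either case $\Phi$ is of class $C^1$ and satisfies $H(x,\nabla\Phi(x))=0$ pointwise by (B), hence is trivially a viscosity solution. It therefore suffices to treat the generic case in which $\Phi$ is only Lipschitz, $\Phi\equiv\min\{0,-S(1)\}$ on the closed set $\bbR\setminus U$ (and this constant equals $\max_{\bbT}\Phi$ by \eqref{ciampi}), and $\Phi$ coincides with $S$ up to an additive constant on each connected component of the open set $U$.

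For the subsolution property, the starting point is that $\nabla\Phi(x)=F(x)$ at every $x\in U$ by \eqref{gattone} and $\nabla\Phi(x)=0$ at every interior point of $\bbR\setminus U$. At any $x_0\in\bbT$, the 1D Clarke-type inclusion for Lipschitz functions gives
\[
D^+\Phi(x_0)\subseteq\overline{\mathrm{conv}}\bigl\{\lim_{n\to\infty}\nabla\Phi(x_n):x_n\to x_0,\ \nabla\Phi(x_n)\text{ exists}\bigr\},
\]
and continuity of $F$ forces every such cluster point to belong to $\{0,F(x_0)\}$. Hence $D^+\Phi(x_0)\subseteq[\min\{0,F(x_0)\},\max\{0,F(x_0)\}]$. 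Since by (A) the map $p\mapsto H(x_0,p)$ is convex and, by (B), vanishes at the two endpoints of this interval, it is nonpositive throughout. This yields $H(x_0,p)\leq 0$ for every $p\in D^+\Phi(x_0)$, i.e., the viscosity subsolution inequality.

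For the supersolution property, at any $x_0$ in $U$ or in the interior of $\bbR\setminus U$ the function $\Phi$ is $C^1$ and already satisfies $H(x_0,\nabla\Phi(x_0))=0$, so the condition holds classically. The only points to check are those in $\partial U$. At such $x_0$, openness of $U$ gives $x_0\in\bbR\setminus U$, so $\Phi(x_0)=\min\{0,-S(1)\}=\max_{\bbT}\Phi$ by \eqref{ciampi}, and in particular $\Phi(x)\leq\Phi(x_0)$ in a neighborhood of $x_0$. If $\phi\in C^1$ is a test function such that $\Phi-\phi$ has a local minimum at $x_0$, normalize it so that $\phi(x_0)=\Phi(x_0)$; then $\phi(x)\leq\Phi(x)\leq\Phi(x_0)=\phi(x_0)$ locally, so $\phi$ itself has a local maximum at $x_0$ and $\phi'(x_0)=0$. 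By (B), $H(x_0,\phi'(x_0))=H(x_0,0)=0\geq 0$, which is the supersolution inequality.

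The main obstacle is the verification at points of $\partial U$, where $\Phi$ is not smooth. Both viscosity inequalities are handled cleanly by the interplay between two elementary observations: the local-maximum structure of $\Phi$ on $\bbR\setminus U$ trivializes the supersolution check, while convexity of $H(x_0,\cdot)$ combined with the two-root hypothesis (B) reduces the subsolution check to the simple fact that a convex function vanishing at two points is nonpositive between them.
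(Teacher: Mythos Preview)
Your argument is correct and takes a genuinely different route from the paper. The paper proceeds via an auxiliary Lemma (Lemma~\ref{grufalo}): it isolates structural conditions (a)--(d) on a function $\varphi$, carries out an explicit case-by-case computation of $D^\pm\varphi$ at the endpoints $o_j^\pm$ of the connected components of the open set $O$ (distinguishing whether the endpoint is an accumulation point of $\partial O$ and the sign of $F$ there), and then verifies that $\Phi$ satisfies condition~(d), namely $F(o_j^-)\leq 0$ and $F(o_j^+)\geq 0$. Your argument bypasses both the case analysis and the sign condition~(d). For the subsolution side you invoke the Clarke-type inclusion $D^+\Phi(x_0)\subseteq\partial^C\Phi(x_0)$ and observe that wherever $\nabla\Phi$ exists it lies in $\{0,F\}$ (on $U$ it equals $F$; on $\bbR\setminus U$, being a global maximum point by \eqref{ciampi}, differentiability forces $\nabla\Phi=0$), so $\partial^C\Phi(x_0)\subseteq[\,0\wedge F(x_0),\,0\vee F(x_0)\,]$ and convexity plus~(B) finish. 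For the supersolution side you use~\eqref{ciampi} directly: any $x_0\in\partial U$ is a global maximum, which immediately forces $D^-\Phi(x_0)\subseteq\{0\}$. What the paper's approach buys is self-containment---it never appeals to Clarke theory and yields the more general Lemma~\ref{grufalo}, which applies to any $\varphi$ satisfying (a)--(d) without the specific maximum structure~\eqref{ciampi}. What your approach buys is brevity and a cleaner conceptual picture: the whole supersolution difficulty collapses once one notices that the non-smooth points are all global maxima.
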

Hypothesis (A) is   a rather weak assumption since usually the
Hamiltonian is obtained as  Legendre transform of the convex
Lagrangian function. Theorem \ref{genteo} states a universality
property: independently from  the exact expression of the
Hamiltonian, as soon as conditions (A) and (B) are satisfied, $\Phi$
is a viscosity solution of the Hamilton-Jacobi equation
\eqref{HJgen}.

We recall  (see \cite{B}, \cite{CL}, \cite{E}) that a function
$\varphi$ is a viscosity solution for the Hamilton--Jacobi
 equation $H(x, \nabla \varphi(x) )=0$
 iff $\varphi \in C(\bbT)$ and
 \begin{align}
&  H(x ,p)\geq  0 \qquad \forall x \in \bbT, \; p \in D^-\varphi(x)
\,, \label{anno1}
\\
& H(x ,p)\leq  0 \qquad \forall x \in \bbT, \; p \in
D^+\varphi(x)\,, \label{anno2}
\end{align}
 where the superdifferential $D^+ \varphi(x)$ and the subdifferential
$D^- \varphi (x)$ are defined as
\begin{align}
& D^+ \varphi (x)= \Big\{p \in \bbR\,:\, \limsup _{y \rightarrow x}
 \frac{\varphi(y)-\varphi(x) -p (y-x) }{|y-x|}\leq 0  \Big\}\,, \label{sveglia1}\\
& D^- \varphi(x)= \Big\{ p \in \bbR \,:\,\liminf  _{y \rightarrow x}
\frac{\varphi(y)-\varphi(x) -p  (y-x) }{|y-x|}\geq  0 \Big\}\,.
\label{sveglia2}
\end{align}
As explained in \cite{FW} for the diffusion and in \cite{FGR2}
 for the PDMP, Hamilton--Jacobi equations appear in a natural way in
 problems related to the computation of the quasipotential.
In the case of the diffusive model the associated Hamiltonian is
\begin{equation}
\label{HJ} H(x,p):= p(p-F(x) ) \, , \;\;(x,p) \in \bbT\times \bbR\,.
\end{equation}
It is easy to check that the Hamiltonian in \eqref{HJ} satisfies
hypotheses $(A)$ and $(B)$ of Theorem \ref{genteo}. For PDMPs the
corresponding Hamiltonian has a more complex structure and is
written in \cite{FGR2} page 298. Also in this case it is possible to
check that hypotheses $(A)$ and $(B)$ are satisfied.

\subsection{Outline of the paper}
In Section \ref{fufi} we recall the definition of the function $W$
given in \cite{FW}. This definition consists of two optimization
problems, whose detailed solution is described in Theorem
\ref{ruspabis} in Section \ref{otto}. In Section \ref{fatica} we
prove Theorem \ref{ruspabis} and part (ii) of Theorem \ref{ruspa}.
In Section \ref{barbidu} we prove part (i) of Theorem \ref{ruspa},
while in Section \ref{urlone} we prove Theorem \ref{genteo}.

\section{Definition of the function $W$ given in
\cite{FW}}
\label{fufi}
 For the reader's convenience and in order to
set the notation for further developments, in this section we
briefly recall the definition of the function $W$ given in Chapter 6
of \cite{FW}. It is convenient to set $F(x)=- 2 b(x)$ and to work
with $F$ instead of the   field $b$ entering in the definition of
the diffusion \eqref{agostino}. We use the same notation of Theorem
\ref{ruspa}. Moreover, we write $F_-$ and $F_+$ for the positive and
negative part of $F$, respectively. This means that $F_- (x)=
|F(x)\wedge  0 |$ and $F_+ (x) = F(x) \lor 0$.   In what follows we
identify the torus $\bbT$ with the 1D circle and we write $\p$ for
the canonical projection $\p: \bbR \rightarrow \bbR/\bbZ = \bbT$. In
particular, the clockwise orientation of the 1D circle corresponds
to the orientation of the path $\p (x)$ as $x $ goes from $0$ to
$1$. In order to avoid confusion in the rest of the paper  we will
not identify $\bbT$ with the interval $[0,1)$.  Finally, we recall
that we think of $F$ both as function  on the torus and  as periodic
function on $\bbR$ with unit period and that, given $x \in \bbT$, we
write $\max/\min _{y \in [x,x+1]} ( S(y)-S(x) )$ for the quantity
$\max/\min  _{\bar y \in [\bar x,\bar x+1]} ( S(\bar y)-S(\bar x) )$
where $\bar x$ is any point in $\bbR$ such that $\p(\bar x)=x$.

\smallskip

Since the function $S$ is constant on each connected component $C$
of $\{ x \in \bbR\,:\, F(x)=0\}$, we denote by $S(C)$ the constant
value $S(x)$, $x \in C$.  Recall the assumption for
\eqref{nonnabruna} that  the closed set $\{ x \in \bbT\,:\,
F(x)=0\}$ has a finite number of connected components.
 We say that
 a connected  component $[a,b]$ is stable if for some $\e>0$ it holds
$F(s)<0$ for all $s \in [a-\e, a)$ and $F(s)>0$ for all $s \in
(b,b+\e ]$. We say that  $[a,b]$ is totally unstable if for some
$\e>0$ it holds $F(s)>0$ for all $s \in [a-\e, a)$ and $F(s)<0$ for
all $s \in (b,b+\e ]$. If the connected components of $\{x \in
\bbT\,:\, F(x)=0\}$ are given by isolated points, then the stable
and unstable ones are obtained by  the canonical  projection $\p:
\bbR \rightarrow \bbT$ of  the points of local minimum and local
maximum for $S$, respectively.

\smallskip

 We call $K_1,K_2, \dots , K_\ell$ the stable connected components
of $\{ x\in \bbT\,:\,   F(x)=0\}$ labeled  in clockwise way, i.e.
$K_i$ and $K_{i+1}$ are nearest--neighbors and $K_{i+1}$ follows
$K_i$ clockwise. Moreover, for generic $i\in \bbZ$, we denote by
$K_i$ the component $K_j$, such that $1 \leq  j \leq \ell$ and
$i\equiv j$ in $\bbZ/ \ell \bbZ$. We observe that between $K_i$ and
$K_{i+1}$ (with respect to clockwise order) there exists only one
connected component of $\{x\in\bbT\,:\, F(x)=0\}$ totally unstable.
We call $A_i$ such connected component.

\smallskip

Given two distinct points $x,y \in \bbT$ we write $\g_{x,y}^+$  and
$\g_{x,y}^-$ for  the  unoriented paths in $\bbT$ connecting $x$ to
$y$ clockwise and  anticlockwise, respectively. If $x=y$ we set
$\g^+_{x,x}= \g^-_{x,x}=\{x\}$. Note that $\g _{x,y}^-= \g_{y,x}^+$.
Given a generic function $h:\bbT\rightarrow \bbR$ the integrals
$\int _{\g_{x,y}^+} h(s) ds $, $\int _{\g_{x,y}^- } h(s)ds$ are
defined as
\begin{equation}
\int_{\g_{x,y}^+} h(s) ds =
\begin{cases}
 \int _{\bar x}^{\bar y } h(s) ds & \text{ if } \bar x<\bar y\,,\\
 \int _{\bar x}^{\bar y+1} h(s) ds & \text{ if } \bar y<\bar x\,,
\end{cases}
\end{equation}
and
 \begin{equation}
 \int_{\g_{x,y}^-}
h(s) ds =
\begin{cases}
\int _{\bar y}^{\bar x} h(s)  ds & \text{ if } \bar y<\bar x\,,\\
\int _{\bar y}^{\bar x+1} h(s) ds & \text{ if } \bar x<\bar y\,,
\end{cases}
\end{equation}
 where $\bar x, \bar y $ are the only elements in $[0,1)$   such that $\p(\bar
 x)= x$ and $\p(\bar y )=   y$.

\smallskip

Given $x,y \in \bbT$ we define
\begin{align}
& V_+ (x,y)= \int _{\g^+ _{x,y} } F_+(s)ds\,,\\
& V_-(x,y)= \int _{ \g^- _{x,y}} F_- (s) ds \,,\\
& V(x,y) =  V_+ (x,y)\wedge V_- (x,y) \,. \end{align}  Due to Lemma
3.1 in Section 4.3 in \cite{FW} and the discussion in Section 6.4 in
\cite{FW}, the above function $V(x,y)$ coincides with the function
$V(x,y)$ defined in \cite{FW} at page 161 as well as our definition
of stable component coincides with the one given on page 188 in
\cite{FW}.

\smallskip

Given two distinct connected components $C_1, C_2 $ in
$\{F=0\}\subset \bbT $, the quantities  $V(x,y)$, $V_\pm (x,y)$ with
$x \in C_1$ and $y \in C_2$ do not depend on the particular choice
of the  points $x,y$ and are denoted respectively by $V(C_1,C_2)$, $V_\pm
(C_1, C_2)$. We set $V(C_1,C_1)= V_\pm (C_1,C_1)=0$ (note that
$V(x,y)=0$ for all $x,y \in C_1$, while due to our definition $V_\pm
(x,y)$ typically depend  from $x,y \in C_1$).

\smallskip

Let us now pass to define the function $W$. If $\{ F=0\}=\emptyset$,
then set $W\equiv 0$. When $\{F=0\}\neq \emptyset$ then necessarily
there is at least one stable connected component. If there is only
one stable connected component, then set $W(K_1)=0 $. Let us now
assume $\ell \geq 2$.

\smallskip

Given an index $i$ in $\{1, \dots, \ell\}$, an oriented graph $g$ is
said to belong to the family $G\{ i \}$ if it is a directed tree
having vertices $\{1,\dots, \ell\}$, rooted at $i$ and pointing
towards the root. This means that
\begin{enumerate}
\item
 $1,\dots, \ell$ are the vertices of  $g$,

\item

 every point $j$ in  $\{1, \dots, \ell\}\setminus \{i\}$  is the
initial point of exactly one arrow,

\item

for any point $j$ in $\{1, \dots, \ell\}\setminus \{i\}$ there
exists a (unique) sequence of arrows leading from it to the point
$i$,

\item

no arrow exits from $i$.
\end{enumerate}

Given a graph $g \in G\{i\}$ the arrow from $m$ to $n$ is denoted by
$m\rightarrow n$.
  Then in \cite{FW} the authors  define
\begin{equation}\label{oche}
 W(K_i) = \min _{g \in G\{i\}} \sum _{(m\rightarrow n)\in g} V(K_m,
 K_n) \,.
 \end{equation}

\begin{Def}\label{trattore} The function $W$ on $\bbT$ is defined in \cite{FW} as
follows: if $\ell=0$ then  $W(x)=0$ for all $x \in \bbT$, otherwise
\begin{equation}\label{anatre}
W(x)=\min _{i\in \{1, \dots, \ell\} } \bigl[ W(K_i) + V(K_i, x )
\bigr]\,.
\end{equation}
\end{Def}
We point out that it holds $W(x)= W(K_i)$ for all $x \in K_i$ as
discussed in  \cite{FW}.

\section{Solution of the variational problems entering in the
definition of $W$}\label{otto}

Given $x,y$ in $\bbT$ we define $\D S(x,y)$ as \begin{equation} \D
S(x,y):= \int _{\g^+_{  x,  y } } F(s) ds\,.
\end{equation}
We note that  $ S(v)-S(u)= \D S(x,y)$ whenever $v \in [u,u+1)$ and
$x=\p(u), y=\p(v)$, $\p$ being the canonical projection
$\p:\bbR\rightarrow \bbT$.
Given $C,C'$ two distinct connected components of $\{F=0\}\subset
\bbT $, we set $\D S(C,C'):= S(x,y)$ for any  $x\in C, y\in C'$.
Trivially the definition does not depend on the choice of $x,y$.

\medskip

Given disjoint connected subsets $A,B \subset \bbT$ and a point $x
\in \bbT$ we write $A \leq  x \leq  B$ if $x \in \g^+_{a,b}$ for
some $a \in A$ and $b \in B$. We write $A<x \leq B$ if $A \leq x
\leq B$ and $x \not \in A$. Similarly, we define $A\leq x < B$ and
$A< x <B$. Then, it holds $$ \D S(K_i, A_i)= \max \bigl\{ \D
S(x,y)\,:\, K_i \leq x\leq y \leq K_{i+1}\bigr\}\,. $$
(Recall that $A_i$ is the totally unstable connected component of $\{F=0\}\subset \bbT$ between $K_i$ and $K_{i+1}$.)

\medskip

We  introduce  some special graphs $g_{i,j} \in G\{i\}$ with $i,j\in
\{1, \dots, \ell\} $. The simplest  definition is given by Figure
\ref{mayetto}.

\begin{figure}[!ht]
    \begin{center}
       \psfrag{i}[l][l]{$i$}
      \psfrag{j}[l][l]{$j$}
       \psfrag{u}[l][l]{$i+1$}
 \psfrag{l}[l][l]{$i-1$}
  \psfrag{a}[l][l]{$g_{i,j}$}
 \psfrag{b}[l][l]{$g_{i,i}$} \psfrag{c}[l][l]{$g_{i,i-1}$}
 \psfrag{k}[l][l]{$j+1$}
 \includegraphics[width=10cm]{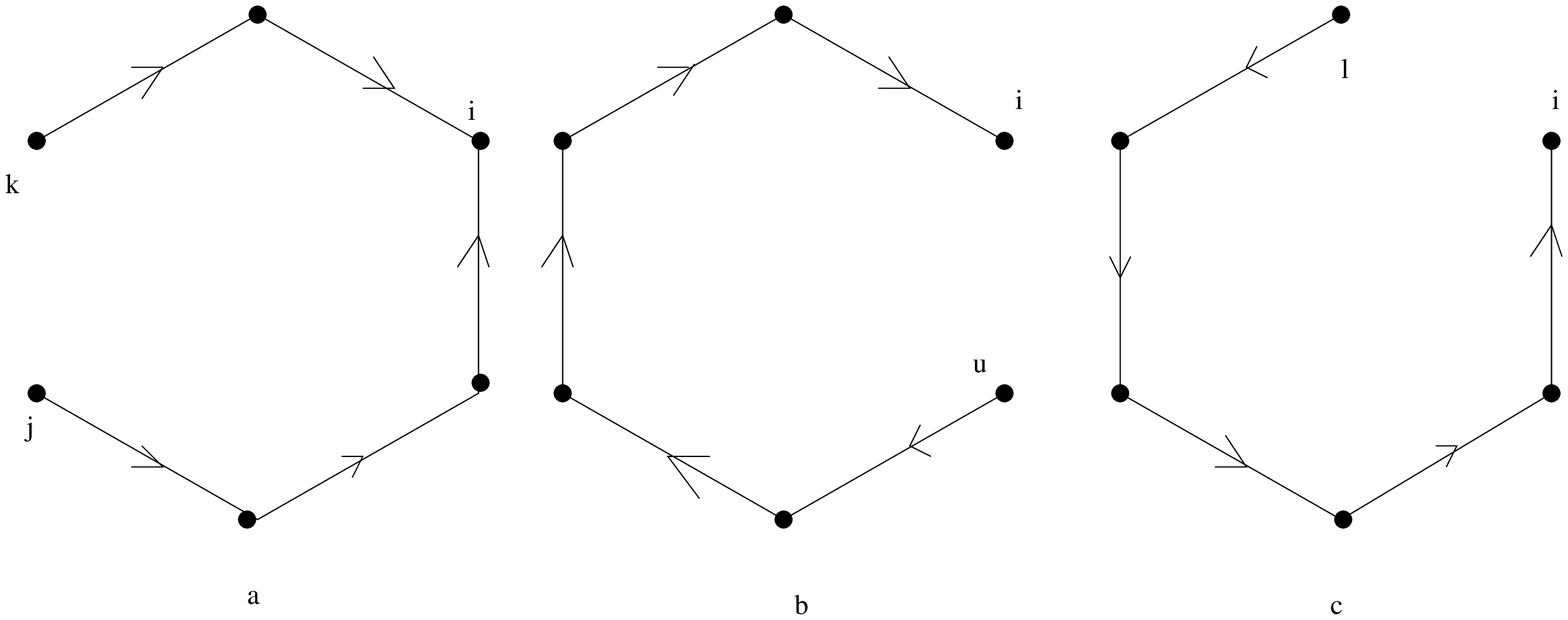}
      \caption{The graphs $g_{i,j}$. Vertices  are labeled clockwise from  $1$ to $ \ell$. }
    \label{mayetto}
    \end{center}
  \end{figure}

 In order to  give a formal definition,  at cost of a
rotation  we can assume that $i=1 \leq j \leq \ell$. Then, $g_{1,j}$
has anticlockwise arrows
 $ j\rightarrow j-1$, $ j-1\rightarrow
j-2$,..., $2\rightarrow 1$ and clockwise arrows $j+1\rightarrow
j+2$, $j+2\rightarrow j+3$,..., $\ell \rightarrow \ell+1\equiv 1$.
Note that if $j=1$ the graph $g$ has only clockwise arrows
($2\rightarrow 3$, $3\rightarrow 4$,..., $\ell\rightarrow
\ell+1\equiv 1$), while if $j=\ell$ the graph $g$ has only
anticlockwise arrows ($\ell\rightarrow \ell-1$, $\ell-1 \rightarrow
\ell-2$,..., $2 \rightarrow 1$). As we will show,  in the minimization
problem \eqref{oche} only the graphs of the type $g_{i,j}$ are
relevant.

\medskip

 We can finally state our result:

\begin{Th}\label{ruspabis}
The minimizers in \eqref{oche} and \eqref{anatre} can be described
as follows.

(i) Take $\ell \geq 2$.  Fixed $i \in \{1, \dots, \ell\}$, take
$J\in \{1,\dots, \ell\} $ such that
$$
\D S(K_i, A_J)= \max _{y \in [x,x +1]}\bigl( S(y)-S(x )\bigr)$$ for
some (and therefore for all) $x \in K_i$. Then
\begin{equation} W(K_i)= \sum _{ (m\rightarrow n) \in g_{i,J} } V(K_m,K_n) \,,
\end{equation} and for each arrow $m\rightarrow n$ in $g_{i,J}$ it
holds
\begin{equation}
V(K_m,K_n)=
\begin{cases} V_- (K_m, K_{m-1}) & \text{ if } n=m-1
\,,\\
V_+(K_m, K_{m+1}) & \text{ if } n=m+1 \,.
\end{cases}
\end{equation}

\medskip

 (ii)  Take $\ell \geq 1$. Take $ x \in \bbT \setminus \bigl( \cup _{r=1}^\ell K_r\bigr)$. Take $i\in \{1, \dots,\ell\}$ such that $K_i< x < K_{i+1}$.
Fix $x_i \in K_i$, $x_{i+1}\in K_{i+1}$. Then, fix  $\bar x_i \in
\bbR$ such that $x_i=\p(\bar x_i)$, and afterwards fix $ \bar x,
\bar x_{i+1}\in (\bar x_i, \bar x_i +1]$  such that $x= \p (\bar
x)$, $x_{i+1}= \p (\bar x_{i+1})$.

Then exactly  one of the following cases holds:
\begin{enumerate}

\item $x_i<x\leq  A_i$ and
 $\max _{y \in [\bar x_i, \bar x_i+1] } S(y)= \max_{y \in [\bar x,\bar x+1]} S(y) $;

 \item
$x_i<x\leq  A_i$ and
 $\max _{y \in [\bar x_i, \bar x_i+1] } S(y)< \max_{y \in [\bar x,\bar x+1]} S(y) $;

\item
$A_i\leq x< x_{i+1}$ and
 $\max _{y \in [\bar x_{i+1}, \bar x_{i+1}+1] } S(y)= \max_{y \in [\bar x,\bar x+1]} S(y)
 $;

\item

$A_i\leq x< x_{i+1}$ and
 $\max _{y \in [\bar x_{i+1}, \bar x_{i+1}+1] } S(y)< \max_{y \in [\bar x,\bar x+1]} S(y) $.

\end{enumerate}
Then in cases (1) and (4) it holds \begin{equation}\label{lorenzo1}
W(x)= W(K_i)+ V(K_i,x)\,, \qquad V(K_i,x)= V_+ (K_i,x)\,,
\end{equation}
while in cases (2) and (3) it holds
\begin{equation}\label{lorenzo2}
W(x)= W(K_{i+1})+ V(K_{i+1},x)\,, \qquad V(K_{i+1},x)= V_-
(K_{i+1},x)\,.
\end{equation}

 \end{Th}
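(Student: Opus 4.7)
The plan is to exploit two structural facts: first, that $V(K_m,K_n)=V_+(K_m,K_n)\wedge V_-(K_m,K_n)$ and that $V_+$ (resp.\ $V_-$) is additive along consecutive stable components on the clockwise (resp.\ anticlockwise) arc, so that e.g.\ $V_+(K_m,K_n)=\sum_k V_+(K_k,K_{k+1})$ when $K_{m+1},\dots,K_{n-1}$ lie on the clockwise arc from $K_m$ to $K_n$; second, that by telescoping the cost of the simple tree $g_{i,j}$ equals $V_-(K_j,K_i)+V_+(K_{j+1},K_i)$, which can be rewritten, using $F=F_+-F_-$ and the relation between ascent/descent and $S$, in the form
\begin{equation*}
\mathrm{cost}(g_{i,j}) \;=\; \Bigl(\text{constant independent of }j\Bigr)\;-\;S(A_j)
\end{equation*}
(with suitable lifts to $\bbR$). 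Hence minimising in $j$ amounts to maximising $S(A_j)$, i.e.\ to maximising $\Delta S(K_i,A_j)$, and the minimiser is exactly the index $J$ defined in the statement.

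For part (i) I would therefore proceed in three steps: (a) reduce the minimisation over $G\{i\}$ to the subfamily $\{g_{i,j}\}$, by showing that any long-jump arrow $m\to n$ in a tree $g$ can be replaced by the corresponding chain of nearest-neighbour arrows along the short arc between $K_m$ and $K_n$, followed by pruning the redundant outgoing arrows of the intermediate vertices, producing a tree of shape $g_{i,j}$ whose cost is not larger (here one uses the additivity of $V_\pm$ together with $V\leq V_\pm$); (b) identify, for a nearest-neighbour arrow $m\to m+1$, that $V(K_m,K_{m+1})=V_+(K_m,K_{m+1})$ (and analogously for $m\to m-1$), because on the short arc between two neighbouring stable components one of $F_+$ or $F_-$ is essentially absent; (c) carry out the explicit computation above to obtain the claimed formula for $W(K_i)$ and the claimed value of each individual arrow.

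For part (ii) I start from $W(x)=\min_{i'}\bigl[W(K_{i'})+V(K_{i'},x)\bigr]$ and first localise the minimiser to $i'\in\{i,i+1\}$. This follows from a triangle inequality argument: for any other $i'$, the term $V(K_{i'},x)$ telescopes through one of $K_i$ or $K_{i+1}$, and using the explicit form of $W(K_{i'})$ from part (i) one checks that rerouting through the adjacent stable component never increases $W(K_{i'})+V(K_{i'},x)$. It then remains to compare $W(K_i)+V_\pm(K_i,x)$ with $W(K_{i+1})+V_\pm(K_{i+1},x)$. Using the explicit form $W(K_i)=C-S(A_{J(i)})$ from (i), the comparison reduces to inequalities between $S(A_{J(i)})$, $S(A_{J(i+1)})$ and the maximum of $S$ on a unit period starting from $\bar x$, and this is precisely captured by the four cases listed: cases (1) and (4) are exactly those in which the cheaper route is to start from $K_i$ and travel clockwise, yielding $V(K_i,x)=V_+(K_i,x)$, while cases (2) and (3) are those in which starting from $K_{i+1}$ and travelling anticlockwise is cheaper, giving $V(K_{i+1},x)=V_-(K_{i+1},x)$.

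The main obstacle is the reduction argument in part (i): turning the replacement of a long-jump arrow by a chain of short arrows into a valid \emph{tree} modification requires some combinatorial care, because the intermediate vertices already carry outgoing arrows that must be deleted, and one has to verify that this deletion preserves the rooted-tree property and never increases the cost. Once this combinatorial reduction is in place, the remainder of the proof is essentially bookkeeping: telescoping the $V_\pm$ to get the cost formula for $g_{i,j}$ in part (i), and translating the geometric conditions on $\max S$ into statements about which of the two neighbours $K_i,K_{i+1}$ realises the minimum in (ii).
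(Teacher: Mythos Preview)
Your step (b) in part (i) is a genuine error. The claim $V(K_m,K_{m+1})=V_+(K_m,K_{m+1})$ for an arbitrary nearest--neighbour pair is false, and the justification (``on the short arc between two neighbouring stable components one of $F_+$ or $F_-$ is essentially absent'') is wrong on its face: on the clockwise arc from $K_m$ to $K_{m+1}$ one crosses the totally unstable component $A_m$, with $F>0$ on the first piece and $F<0$ on the second, so both $F_+$ and $F_-$ contribute. More to the point, $V_-(K_m,K_{m+1})$ is the integral of $F_-$ along the \emph{long} anticlockwise arc; already with $\ell=2$ one can choose the slope heights so that $V_-(K_1,K_2)<V_+(K_1,K_2)$. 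Hence your identity $\mathrm{cost}(g_{i,j})=V_-(K_j,K_i)+V_+(K_{j+1},K_i)=:t(i,j)$ is in general only the inequality $\sum_{(m\to n)\in g_{i,j}}V(K_m,K_n)\le t(i,j)$, and your identification of $J$ via maximising $S(A_j)$ is left unproved. In fact the equalities $V(K_m,K_{m\pm1})=V_\pm(K_m,K_{m\pm1})$ along the arrows of $g_{i,J}$ are part of the \emph{conclusion} of the theorem, not an input.

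The paper closes this gap by a squeeze. The easy direction $W(K_i)\le\min_j t(i,j)$ uses only $V\le V_\pm$ and telescoping. For the reverse inequality one shortens one arrow at a time: if an arrow $m\to n$ of signature $\sigma$ has some intermediate $x_j\in\gamma^{\sigma}_{x_m,x_n}$, one replaces $m\to n$ by $m\to j$ (and, if the path from $j$ to the root used $m\to n$, also redirects $j$'s outgoing arrow to $n$); a short case analysis shows this preserves the rooted--tree property and does not increase the cost. Iterating, one lands in the class $G^0\{i\}$ of trees with no such intermediate vertex. A tree in $G^0\{i\}$ has only nearest--neighbour arrows, and the condition ``no intermediate vertex on $\gamma^\sigma$'' \emph{forces} $\sigma(r\to r+1)=+$ and $\sigma(r\to r-1)=-$ (otherwise the long arc contains all other $x_j$), whence its cost is exactly $t(i,j)$ for some $j$. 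This gives $W(K_i)\ge\min_j t(i,j)$, and the squeeze $W(K_i)=t(i,J)\ge\sum_{g_{i,J}}V\ge W(K_i)$ then yields, a posteriori, the arrow--by--arrow equalities you attempted to assert directly in (b).

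For part (ii) your localisation plan is reasonable but the paper's route is cleaner and avoids comparing all $i'$ against one another: one proves the uniform lower bound $W(K_{i'})+V_\pm(K_{i'},x)\ge\int_0^1F_+ -\max_{y\in[x,x+1]}(S(y)-S(x))$ valid for \emph{every} index $i'$ and both signs (using the explicit value of $W(K_{i'})$ from part (i)), and then checks directly that in each of the four listed cases the indicated choice $(j,\sigma)\in\{(i,+),(i+1,-)\}$ achieves equality. That single equality simultaneously identifies the minimiser in the definition of $W(x)$ and forces $V(K_j,x)=V_\sigma(K_j,x)$.
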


\section{Proof of Theorem \ref{ruspa}(ii)  and Theorem \ref{ruspabis}
}\label{fatica}
  Part (ii)  of Theorem \ref{ruspa}  follows easily from the following
 result:

\begin{Pro}\label{biancaneve}
For each $x \in \bbT$
it holds
\begin{equation}\label{pisolo}
W(x)-\Phi(x)=W(x)+ \max _{y \in [  x,   x+1]} \bigl(S(y)-S(
x)\bigr) = \int _0 ^1 F_+ (s) ds\,,
\end{equation}
In particular, the l.h.s. does not depend on $x$.
\end{Pro}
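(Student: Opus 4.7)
The plan is to derive \eqref{pisolo} from the explicit description of the minimizers of \eqref{oche} and \eqref{anatre} provided by Theorem \ref{ruspabis}. If $\ell=0$ then $W\equiv 0$, $F$ does not change sign, $S$ is monotone, and \eqref{pisolo} collapses to the trivial identity $\max\{0,S(1)\}=\int_0^1 F_+(s)\,ds$, so assume $\ell\geq 1$.

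I would first verify \eqref{pisolo} at $x=x_1\in K_1$; after a rotation this covers every $K_i$. Theorem \ref{ruspabis}(i) gives $W(K_1)=\sum_{(m\to n)\in g_{1,J}}V(K_m,K_n)$, with $J$ chosen so that $\Delta S(K_1,A_J)=\max_{y\in[\bar x_1,\bar x_1+1]}S(y)-S(\bar x_1)$ (when $\ell=1$, $J=1$ and the sum is empty, giving $W(K_1)=0$ as expected). On each arc $(\bar K_m,\bar K_{m+1})$ the sign of $F$ changes exactly once at $\bar A_m$, with $F\geq 0$ before and $F\leq 0$ after, which yields
\begin{equation*}
V_+(K_m,K_{m+1})=S(\bar A_m)-S(\bar K_m),\qquad V_-(K_{m+1},K_m)=S(\bar A_m)-S(\bar K_{m+1}).
\end{equation*}
The tree $g_{1,J}$ has anticlockwise arrows $m\to m-1$ for $2\leq m\leq J$ (contributing $V_-$) and clockwise arrows $m\to m+1$ for $J+1\leq m\leq \ell$ (contributing $V_+$). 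Combining these with the telescoping identity
\begin{equation*}
\Delta S(K_1,A_J)=\sum_{k=1}^{J-1}\bigl(V_+(K_k,K_{k+1})-V_-(K_{k+1},K_k)\bigr)+V_+(K_J,K_{J+1}),
\end{equation*}
every $V_-$ cancels and one is left with
\begin{equation*}
W(K_1)+\Delta S(K_1,A_J)=\sum_{k=1}^{\ell} V_+(K_k,K_{k+1})=\int_0^1 F_+(s)\,ds,
\end{equation*}
which is \eqref{pisolo} at $x_1$.

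To extend to arbitrary $x\notin\cup_j K_j$, pick $i$ with $K_i<x<K_{i+1}$ and apply Theorem \ref{ruspabis}(ii): in each of the four cases, $W(x)=W(K_r)+V_\pm(K_r,x)$ for a specific $r\in\{i,i+1\}$ and a specific sign. The extra term $V_\pm(K_r,x)$ is explicit (because $F$ has constant sign on the relevant arc, between $K_r$ and either $x$ or $A_i$), and the hypothesis of each case is precisely the geometric condition ensuring
\begin{equation*}
\max_{y\in[\bar x,\bar x+1]}S(y)-S(\bar x)+V_\pm(K_r,x)=\max_{y\in[\bar x_r,\bar x_r+1]}S(y)-S(\bar x_r).
\end{equation*}
Combined with the already-established identity at $K_r$, this yields \eqref{pisolo} at $x$.

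The main technical effort lies in this last step. Although each of the four cases boils down to a short algebraic manipulation, careful bookkeeping on the universal cover is required: one has to split the two translated intervals $[\bar x,\bar x+1]$ and $[\bar x_r,\bar x_r+1]$ into overlapping and non-overlapping parts, use $S(y+1)=S(y)+S(1)$ to shift the leftover piece, and verify that the sign of $S(1)$ combined with the equalities/strict inequalities in the hypothesis of Theorem \ref{ruspabis}(ii) force the displayed identity with the correct reference point $K_r$ (in particular, the strict inequalities in cases (2) and (4) turn out to force $S(1)$ to have a specific sign, which is exactly what is needed to rule out the translate on the universal cover).
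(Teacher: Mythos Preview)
Your computational steps mirror the paper's: the telescoping at $x\in K_1$ is exactly Lemma~\ref{dondolo}, and the four--case identity $V_\pm(K_r,x)+\max_{y\in[\bar x,\bar x+1]}\bigl(S(y)-S(\bar x)\bigr)=\max_{y\in[\bar x_r,\bar x_r+1]}\bigl(S(y)-S(\bar x_r)\bigr)$ is exactly Lemma~\ref{bici_wander}.

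The gap is a circularity. You invoke Theorem~\ref{ruspabis}(ii) to obtain $W(x)=W(K_r)+V_\pm(K_r,x)$, but in the paper Theorem~\ref{ruspabis}(ii) and Proposition~\ref{biancaneve} are not proved in sequence: they are established \emph{simultaneously}, in the paragraph labeled ``Proof of Proposition~\ref{biancaneve} and Theorem~\ref{ruspabis}(ii)''. The ingredient that welds them together, and which your proposal does not supply, is Lemma~\ref{nebbia}: the lower bound
\[
W(K_i)+V_\pm(K_i,x)\ \geq\ \int_0^1 F_+(s)\,ds-\max_{y\in[x,x+1]}\bigl(S(y)-S(x)\bigr)
\]
for \emph{every} index $i$ and \emph{both} signs. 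That inequality, combined with your case--by--case equality for one specific $(r,\pm)$, is precisely what pins down the minimizer in \eqref{anatre} (yielding \eqref{lorenzo1}--\eqref{lorenzo2}) and the value of $W(x)$ (yielding \eqref{pisolo}) at the same time. Since the paper's route to Theorem~\ref{ruspabis}(ii) already delivers Proposition~\ref{biancaneve} as a byproduct, citing the former to prove the latter is effectively assuming the conclusion. To make your argument self--contained you need the short proof of Lemma~\ref{nebbia}, which follows from your identity at $K_i$ together with the elementary estimate $V_+(x_i,x)=\int_{\bar x_i}^{\bar x}F_+(s)\,ds\geq \max_{y\in[\bar x_i,\bar x]}\bigl(S(y)-S(\bar x_i)\bigr)$ (and its mirror for $V_-$).
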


We divide the proof of the above proposition in several steps.

\begin{Le}\label{cuore}
Given $\ell \geq 2$ and  $i,j\in \{1,\dots , \ell\} $,  consider
$$t(i,j):=V_- (K_j,K_i)+ V_+(K_{j+1},K_i) \,.
$$
Then, \begin{equation}\label{cugini} W(K_i)= \min_{1\leq j \leq
\ell} t(i,j)\,.
\end{equation}
Moreover,  $ t(i,J)= \min _{1\leq j \leq \ell} t(i,j)$ if and only
if
\begin{equation}\label{netti} \D S (K_i, A_J)= \max _{y \in [x,x+1] } \bigl(S(y)-S(x)\bigr)\,, \qquad \forall x \in K_i \,.
\end{equation}
\end{Le}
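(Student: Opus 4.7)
My plan is to split the proof into three main steps: an algebraic identity reducing $t(i,j)$ to the geometry of $S$, the easy direction $W(K_i)\leq\min_j t(i,j)$, and the harder reverse inequality together with the characterization of minimizers.

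First I would establish the key identity
$$t(i,j)\;=\;\int_{\bbT} F_+(s)\,ds\;-\;\D S(K_i,A_j).$$
To prove it, write $V_-(K_j,K_i)=\int_{\g^+_{K_i,K_j}} F_-\,ds$ (since $\g^-_{K_j,K_i}=\g^+_{K_i,K_j}$ as sets) and $V_+(K_{j+1},K_i)=\int_{\g^+_{K_{j+1},K_i}} F_+\,ds$, then add the two, decompose $\int_{\bbT} F_+$ as $\int_{\g^+_{K_{j+1},K_i}} F_+ +\int_{\g^+_{K_i,K_j}}F_+ +\int_{\g^+_{K_j,K_{j+1}}}F_+$, and use the sign of $F$ on the two sub-arcs of $\g^+_{K_j,K_{j+1}}$ separated by $A_j$. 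Since $A_1,\dots,A_\ell$ are precisely the totally unstable components, their lifts to $\bbR$ exhaust the local maxima of $S$ on $[\bar x_i,\bar x_i+1]$ for $\bar x_i\in\pi^{-1}(K_i)$, so $\max_j\D S(K_i,A_j)=\max_{y\in[\bar x_i,\bar x_i+1]}(S(y)-S(\bar x_i))$. Combining with the identity shows that $J$ realizes $\min_j t(i,j)$ if and only if \eqref{netti} holds.

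For the upper bound, I would use the graph $g_{i,j}\in G\{i\}$ for each $j$: its anticlockwise arrows are $m\to m-1$ for $i<m\leq j$ and its clockwise arrows are $m\to m+1$ for $j<m<i$ (cyclically). Using $V(K_m,K_{m\pm 1})\leq V_\mp(K_m,K_{m\pm 1})$ together with the additivity of $V_-$ (resp.\ $V_+$) over concatenated anticlockwise (resp.\ clockwise) arcs, the telescoping sum of edge-weights in $g_{i,j}$ is bounded by $V_-(K_j,K_i)+V_+(K_{j+1},K_i)=t(i,j)$, hence $W(K_i)\leq t(i,j)$.

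For the lower bound, the idea is: given any $g\in G\{i\}$, assign to each edge $e=(K_m\to K_n)$ a direction $\s(e)\in\{+,-\}$ realizing $V(K_m,K_n)=V_{\s(e)}(K_m,K_n)$, interpret $e$ as an arc-path on the cycle, and let $\a_k,\b_k$ count how many of these paths traverse the elementary arc $\g^+_{K_k,K_{k+1}}$ clockwise, respectively anticlockwise. Then
$$w(g)=\sum_k\Bigl(\a_k\int_{\g^+_{K_k,K_{k+1}}}F_+ \;+\;\b_k\int_{\g^+_{K_k,K_{k+1}}}F_-\Bigr).$$
The key claim is that there exists $J$ such that $\a_k\geq 1$ on each arc of the clockwise path from $K_{J+1}$ to $K_i$ and $\b_k\geq 1$ on each arc of the clockwise path from $K_i$ to $K_J$; summing over these arcs then yields $w(g)\geq V_+(K_{J+1},K_i)+V_-(K_J,K_i)=t(i,J)\geq\min_j t(i,j)$.

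The main obstacle is producing the index $J$ for an arbitrary tree. My plan is to view the tree together with $\s$ as a flow on the cycle sending unit mass from each non-root vertex to $K_i$, and to look for a \emph{cut arc} $\g^+_{K_J,K_{J+1}}$ separating the cycle into two halves that are flow-consistent with $K_i$. For nearest-neighbor trees $g=g_{i,J}$ the choice is immediate. In the general case I would try to reduce to the nearest-neighbor setting by iteratively replacing a non-nearest-neighbor edge $K_m\to K_n$ by a single elementary arrow $K_m\to K_{m'}$ with $K_{m'}\in\{K_{m-1},K_{m+1}\}$ lying in the component of $K_i$ after deletion of $K_m\to K_n$, checking that at each step $w$ does not increase and the graph remains a tree in $G\{i\}$; the argument is most delicate when neither cycle-neighbor of $K_m$ lies in the root component, which requires modifying a different edge of the tree before $K_m\to K_n$ can be shortened.
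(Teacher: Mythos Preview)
Your identity $t(i,j)=\int_{\bbT} F_+\,ds - \D S(K_i,A_j)$ is correct and yields the characterization \eqref{netti} of the minimizing index more directly than the paper, which instead computes the differences $t(i,J)-t(i,j)$ arc by arc after \eqref{cugini} is established. The upper bound via the graphs $g_{i,j}$ is exactly the paper's argument.

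The genuine gap is the lower bound. Your cut--arc claim---that for every $g\in G\{i\}$ there exists $J$ with $\a_k\geq 1$ on one side and $\b_k\geq 1$ on the other---is the entire content of the inequality and you do not prove it. The surgery fallback is also incomplete: shortening $K_m\to K_n$ to an elementary arrow $K_m\to K_{m\pm 1}$ \emph{without increasing the weight} forces you to pick the cycle--neighbor in the $\s(m\to n)$--direction, and that vertex need not lie in the root component; when it does not, ``modify a different edge first'' is not a procedure---you specify no rule, no monotone potential, and no termination argument. A concrete obstruction: take $\ell=5$, $i=1$, the tree with arrows $2\to 3$, $4\to 3$, $3\to 1$, $5\to 1$ and $\s(3\to 1)=-$. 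The only non--nearest--neighbor edge is $3\to 1$, but after deleting it both cycle--neighbors $K_2,K_4$ of $K_3$ lie outside the root component $\{1,5\}$, so your move is blocked and there is no other long edge to shorten first.

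The paper closes this by allowing $m\to n$ to be redirected to $m\to j$ for \emph{any} vertex $j$ lying on the arc $\g^{\s(m\to n)}_{x_m,x_n}$, and by introducing the counter $\D(g)=\sum_{(m\to n)\in g}\#\{j\neq m,n:\,x_j\in\g^{\s(m\to n)}_{x_m,x_n}\}$. Two moves are used: if the tree path from $j$ to the root avoids $m\to n$, replace $m\to n$ by $m\to j$; if that path uses $m\to n$, replace the pair $m\to n$, $j\to j'$ by the pair $m\to j$, $j\to n$. In both cases one checks that the result is still in $G\{i\}$, that the weight does not increase, and that $\D$ strictly decreases. Hence in finitely many steps one reaches $\D(g)=0$, which forces $g=g_{i,j}$ for some $j$ with weight exactly $t(i,j)$. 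The second of these two moves is precisely the mechanism your plan is missing; it is what resolves the example above in one step.
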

\begin{proof}
 In order to simplify the notation, without loss of generality
we take $i=1$.  Let us first show that
 \begin{equation}\label{setissima} W(K_1)=\min_{g \in G\{1\}
} \sum_{(m \rightarrow n) \in g} V(K_m,K_n) \leq  \min_{ 1\leq j
\leq \ell} t(1,j)\,.
\end{equation}
  For each $j \in \{1,\dots, \ell\}$, consider the graph $g_{1,j}\in
G\{1\}$ defined in Section \ref{fufi}.
If $j \not=1, \ell$ we have
\begin{equation}\label{occhio}
t(1,j)\geq  \sum _{r=2}^{j} V(K_r, K_{r-1})+ \sum _{r=j+1}^{\ell-1}
V(K_r, K_{r+1})= \sum _{(m\rightarrow n)\in g_{1,j}} V(K_m,K_n)\,,
\end{equation}
since
\begin{align}
&  V_-(K_j, K_1) = \sum _{r=2}^{j} V_-(K_r, K_{r-1})\geq \sum
_{r=2}^{j} V(K_r, K_{r-1})\,, \label{squillo1} \\
& V_+(K_{j+1},K_1)= \sum _{r=j+1}^{\ell } V_+(K_r, K_{r+1})\geq
\sum _{r=j+1}^{\ell } V(K_r, K_{r+1})\,. \label{squillo2}
\end{align}
We point out that \eqref{squillo1} holds also for $j=\ell$, while
\eqref{squillo2} holds also for $j=1$.
As a  consequence,  \eqref{occhio} is valid also for $j=1,\ell$ and
this readily implies  \eqref{setissima}.
\medskip

 Let us now prove that \eqref{setissima} remains valid with opposite inequality.
  To this aim, we take a generic graph $g \in G\{1\}$ and fix $x_n\in K_n$ for each stable connected
  component $K_n$. Given an arrow
  $m\rightarrow n$, we define  $\s(m\rightarrow n)\in\{-,+\}$ as
$$
 \s(m\rightarrow n )=  \begin{cases}
 +  & \text{ if } V_-(K_m,K_n ) \geq  V_+(K_m,
K_n )\,, \\
- & \text{ if } V_-(K_m,K_n )<  V_+(K_m, K_n )\,.
\end{cases}
$$
Note that
\begin{equation}\label{brumsporco}
 V(K_m, K_n)=V_{\s(m\rightarrow n)} (K_m,K_n)=
  \int _{\g^{\s(m\rightarrow n) } _{x_m, x_n}} F_{\s(m\rightarrow n)} (s)ds\,.
\end{equation}

\medskip
Given $g \in G\{1\}$, let us call
\begin{equation}
\Delta(g):=\sum_{(m\to n)\in g}\sum_{j \in \{1, \dots
,\ell\}\setminus   \left\{m,n\right\}}\chi\left( x_j \in
\gamma_{x_m,x_n}^{\sigma(m\to n)} \right)\,, \label{cont}
\end{equation}
where $\chi$ denotes the characteristic function ($\chi (A)$ equals
$1$ if the condition $A$ is satisfied and zero otherwise). We denote
by $G^0\left\{1\right\}$ the subset of $G\left\{1\right\}$
containing the elements $g$ satisfying $\Delta(g)=0$, i.e. such that
for any $(m\to n)\in g$ and for any $j\ne m,n$, it holds $x_j\not
\in \gamma_{x_m,x_n}^{\sigma(m\to n)}$. We next show that
$G^0\left\{1\right\}$ is not empty and that  we can restrict to
$G^0\left\{1\right\}$ the first  minimum appearing in equation
\eqref{setissima} (we assume $\ell>2$ otherwise this statement is
obviously true). To this aim, if $\D(g)\geq 1$ we fix $m,n,j$ such
that   $x_j \in \gamma_{x_m,x_n}^{\sigma(m\to n)}$ and construct a
new graph $g'$ satisfying  the following properties:
\begin{itemize}

\item[(i)]  $g'\in G\left\{1\right\}$,

\item[(ii)] $\Delta(g')\leq \Delta(g)-1$,

\item[(iii)] $\sum_{(r \rightarrow s) \in g'} V(K_r,K_s)\leq \sum_{(r \rightarrow s) \in g}
V(K_r,K_s)$.

\end{itemize}
We need to distinguish two cases:
\begin{itemize}

\item[(C1)] the unique path in $g$ from $j$ to $1$ does not contain the arrow
  $m\rightarrow n$,
\item[(C2)]
 the unique path in $g$ from $j$ to $1$  contains  the arrow
  $m\rightarrow n$.
\end{itemize}
In case (C1) the graph $g'$ is obtained from $g$ by
 removing the edge $m\to n$  and adding the edge $m\to
j$. In case (C2) we call $j'$  the arriving point of the unique
arrow in $g$ exiting from $j$ (note that in this case necessarily
$j\neq 1$), then we remove from $g$ the arrows $m\to n$ and $j \to
j'$ and add the arrows $m\to j$ and $j \to n$. See Figure \ref{transform}.


\begin{figure}[!ht]
    \begin{center}
      \psfrag{A}[l][l]{Case (C1)}
      \psfrag{b}[l][l]{$x_1$}
      \psfrag{c}[l][l]{$x_n=x_2$}
      \psfrag{d}[l][l]{$x_3$}
      \psfrag{e}[l][l]{$x_j=x_4$}
      \psfrag{f}[l][l]{$x_m=x_5$}
      \psfrag{g}[l][l]{$x_6$}
      \psfrag{h}[l][l]{$x_7$}
      \psfrag{i}[l][l]{$x_1$}
      \psfrag{l}[l][l]{$x_n=x_2$}
      \psfrag{m}[l][l]{$x_3$}
      \psfrag{n}[l][l]{$x_j=x_4$}
      \psfrag{o}[l][l]{$x_m=x_5$}
      \psfrag{p}[l][l]{$x_6$}
      \psfrag{q}[l][l]{$x_7$}
 \psfrag{R}[l][l]{Case (C2)}
      \psfrag{s}[l][l]{$x_1$}
      \psfrag{t}[l][l]{$x_n=x_2$}
      \psfrag{u}[l][l]{$x_3$}
      \psfrag{v}[l][l]{$x_j=x_4$}
      \psfrag{z}[l][l]{$x_m=x_5$}
      \psfrag{x}[l][l]{$x_{j'}=x_6$}
      \psfrag{y}[l][l]{$x_7$}
      \psfrag{B}[l][l]{$x_1$}
      \psfrag{C}[l][l]{$x_n=x_2$}
      \psfrag{D}[l][l]{$x_3$}
      \psfrag{E}[l][l]{$x_j=x_4$}
      \psfrag{F}[l][l]{$x_m=x_5$}
      \psfrag{G}[l][l]{$x_{j'}=x_6$}
      \psfrag{H}[l][l]{$x_7$}
 \includegraphics[width=13cm]{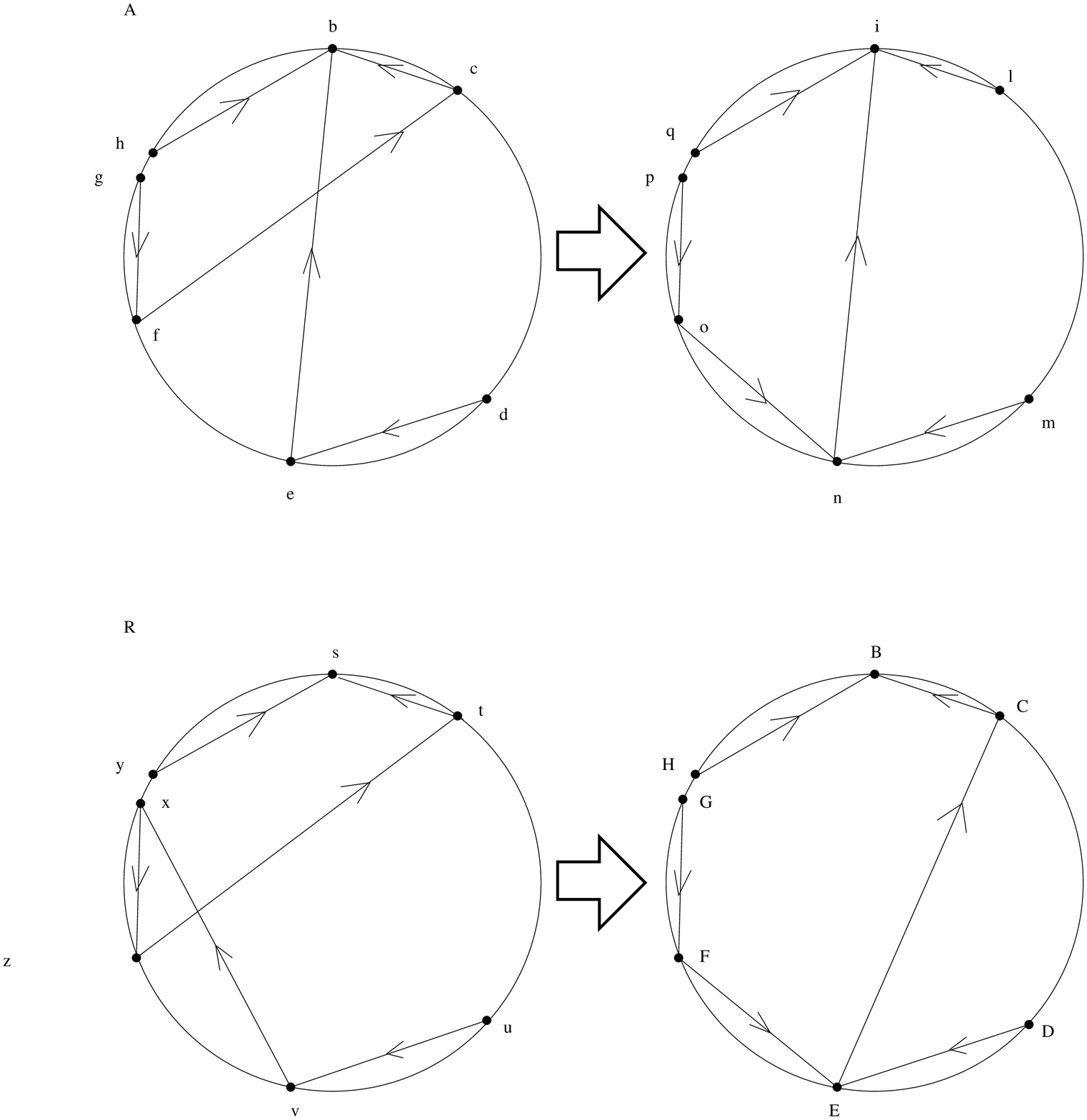}
      \caption{Transformation $g\to g'$  in cases (C1) and (C2), with $m=5$, $n=2$, $j=4$ and
$\s(m\to n)=-$. In the above picture, we have identified graphs on
$\{1,\dots, \ell\}$ with graphs on $\{x_1, \dots, x_\ell\}$. }
    \label{transform}
    \end{center}
  \end{figure}

\medskip

 Let us now show that
$g'$ satisfies the properties (i), (ii) and (iii).

\smallskip

$\bullet $ Proof of (i). Trivially, in all cases properties (1), (2)
and (4) in the definition of $G\{i\}$ given in Section \ref{fufi}
are satisfied. We focus on property (3).

Consider first case (C1). Removing the arrow $m\to n$ the graph $g$
splits into two connected  components. One component coincides with the
vertices whose unique path towards the root $1$ in $g$ contains the
arrow $m\to n$ (in particular $m$ belongs to this component), the
other component coincides with the vertices whose unique path
towards the root $1$ in $g$ does not use the arrow $m\to n$ (in
particular both $j$ and $n$ belong to this component). Both
components are trees. Moreover, by definition of $g$, the component
containing $m$ is oriented towards $m$. Likewise the other component
is a directed tree oriented towards its root $1$. If we add the
arrow $m\to j$ the graph that we obtain is connected and it is a
tree.  Using the orientations of the two merged components we easily
obtain that it is oriented towards its root $1$.

Consider now case (C2). By arguments similar to the
previous case we have the following. Removing the arrows $m\to n$
and $j\to j'$ we obtain three connected components. One is a
directed tree oriented towards its root $j$, one is a directed tree
oriented towards its root $m$ and one is a directed tree containing
$n$ and oriented towards its root $1$. Adding the arrows $m\to j$
and $j\to n$ we obtain a directed tree oriented towards the root
$1$. See \cite{BB} for the basic characterizing properties of trees
that we implicitly used in the proof.

\smallskip

\noindent $\bullet$ Proof of  (ii).  We first consider case (C1). We
have
$$ \D(g')= \D (g) -\sum_{r\neq m,n}\chi\left(x_r \in \gamma_{x_m,x_n}^{\sigma(m\to n)}
\right)+\sum_{r\neq m,j}\chi\left(x_r \in
\gamma_{x_m,x_j}^{\sigma(m\to j)} \right)\,.
$$
Since it must be  $\sigma(m\to j)=\sigma(m\to n)$ and $x_n\not
\in\gamma_{x_m,x_j}^{\sigma(m\to j)}$, we conclude that
\begin{equation}\label{maddalina}
\sum_{r\neq m,j}\chi\left(x_r \in \gamma_{x_m,x_j}^{\sigma(m\to j)}
\right)\leq \sum_{r\neq m,n}\chi\left( x_r \in
\gamma_{x_m,x_n}^{\sigma(m\to n)} \right)-1\,,
\end{equation}
thus implying (ii). Let us now consider case (C2). We have
\begin{multline}\label{igorstra} \D(g')= \D (g) - \sum_{r\neq m,n}\chi\left(x_r \in
\gamma_{x_m,x_n}^{\sigma(m\to n)} \right) -\sum_{r\neq
j,j'}\chi\left(x_r \in \gamma_{x_j,x_{j'}}^{\sigma(j\to j' )}
\right) \\ + \sum_{r\neq m,j}\chi\left(x_r \in
\gamma_{x_m,x_j}^{\sigma(m\to j)} \right) + \sum_{r\neq
j,n}\chi\left(x_r \in \gamma_{x_{j},x_n}^{\sigma(j\to n)} \right)\,.
\end{multline}
Again it must be $\sigma(m\to j)=\sigma(m\to n)= \s(j \to n)$.
Therefore the last two terms above equal
$$
\sum_{r\neq m,j,n}\chi\left(x_r \in \gamma_{x_m,x_n}^{\sigma(m\to
j)} \right) =\sum_{r\neq m,n}\chi\left(x_r \in
\gamma_{x_m,x_n}^{\sigma(m\to n)} \right)-1\,.
$$ This identity together with \eqref{igorstra} leads to (ii).
\smallskip

\noindent  $\bullet$ Proof of (iii). We first consider case (C1).
Then we have \begin{multline} \sum _{(r\to s) \in g } V(K_r, K_s) -
\sum_{ (r\to s) \in g'} V(K_r, K_s) = V(K_m, K_n)- V(K_m, K_j)=
\\\int_{\gamma_{x_m,x_n}^{\sigma(m\to n)}}F_{\s(m\rightarrow n)}
(s)ds-\int_{\gamma_{x_m,x_j}^{\sigma(m\to j)}}F_{\s(m\rightarrow j)}
(s)ds
 \,.
 \end{multline}
Since $\s (m \to n)= \s (m \to j)$ the last difference must be
nonnegative.

In case (C2) we have \begin{multline*}
 \sum _{(r\to s) \in g }
V(K_r, K_s) - \sum_{ (r\to s) \in g'} V(K_r, K_s) = \\
V(K_m, K_n)+
V(K_j,K_{j'})- V(K_m, K_j)-  V(K_j,K_n)\,.
\end{multline*}
The last difference must be positive  since $\sigma(m\to
j)=\sigma(m\to n)= \s(j \to n)$ and therefore  $ V(K_m, K_j)+
V(K_j,K_n)= V(K_m, K_n)$.

\medskip

We have now proved our claim concerning the new graph $g'$.  This
claim trivially implies  that, starting from any initial graph
belonging to $G\left\{1\right\}$, with a finite number of iterations
of the above procedure we end with a graph belonging to
$G^0\left\{1\right\}$. In particular $G^0\left\{1\right\}$ is not
empty. Moreover from property (iii) we have that
$$
\min_{g\in G^0\left\{1\right\}}\sum_{(r \rightarrow s) \in g}
V(K_r,K_s)= \min_{g\in G\left\{1\right\}}\sum_{(r \rightarrow s) \in
g} V(K_r,K_s)
$$
Clearly if $g\in G^0\left\{1\right\}$ then it can contain only
arrows of the type $r\to r+1$ or $r\to r-1$. Moreover if it contains
$r \to r+1 $ then necessarily $V(K_r,K_{r+1})=V_+(K_r,K_{r+1})$; if
it contains $(r\to r-1)$ then necessarily
$V(K_r,K_{r-1})=V_-(K_r,K_{r-1})$. This implies that if $g\in
G^0\left\{1\right\}$ then $g=g_{1,j}$ for some $j$ and moreover
$$
\sum_{(r \rightarrow s) \in g} V(K_r,K_s)=t(1,j)\,.
$$
Summarizing we have
\begin{eqnarray*}
& &\min_{g\in G\left\{1\right\}}\sum_{(r \rightarrow s) \in g}
V(K_r,K_s)= \min_{g\in G^0\left\{1\right\}}\sum_{(r \rightarrow s)
\in g} V(K_r,K_s)\\
& &=\min_{\left\{j:\ g_{1,j}\in G^0\left\{1\right\}\right\}}\sum_{(r
\rightarrow s) \in g} V(K_r,K_s)=\min_{\left\{j:\ g_{1,j}\in
G^0\left\{1\right\}\right\}}t(1,j)\geq \min_{1\leq j\leq
\ell}t(1,j)\,,
\end{eqnarray*}
that is \eqref{setissima} with the reversed inequality.

\medskip

It remains to prove the last statement in Lemma \ref{cuore}. Again,
we take $i=1$ for simplicity of notation. We fix two distinct
indices
 $j,J$ in $\{1,2, \dots, \ell\}$ and we fix  $a_j \in A_j$ and $a_J\in
 A_J$.
 If $1\leq j <J$ then it holds
$$ V_-(K_J, K_j)= \int _{\g ^+_{a_j, a_J} } F_- (s)ds \,, \qquad
 V_+(K_{j+1}, K_{J+1} )= \int _{\g ^+_{a_j, a_J} } F_+ (s)ds\,.
 $$
 Therefore we can write
\begin{multline}
t(1,J)-t(1,j)= V_- (K_J, K_j)- V_+(K_{j+1}, K_{J+1} )=\\
- \int_{\g^+_{a_j, a_J}} F(s)ds= -\D S(a_j, a_J)= -\D S(A_j, A_J)\,.
\end{multline}
Similarly, if $J <j \leq \ell$, it holds $t(1,J)-t(1,j)= \D
S(A_J,A_j)$. In particular, an index $J$ realizes the  $ \min
_{1\leq j \leq \ell} t(1,j)$ if and only if $\D S(A_j, A_J)\geq  0$
for all  $j$ such that $1\leq j <J$ and $\D S(A_J,A_j)\leq 0$ for
all $j$ such that $J <j \leq \ell$. These inequalities read as
follows:
 considering $S$ on the interval $[x_0,x_0+1]$ such that $\p(x_0) \in
 K_1$, the highest local maximum (and therefore the maximum)  of $S$ is attained at all points $y
 \in [x_0,x_0+1]$ such that $\p(y) \in A_J$.
  This coincides with the characterization \eqref{netti}.
\end{proof}

\begin{Rem}\label{pancetta} Fixed $i \in \{1,
\dots, \ell\}$, set
\begin{align*}
& N_i^+=\{ j\in \{1, \dots, \ell\}\,:\, V_+(K_j,K_i)\leq
V_-(K_j,K_i)
\}\,,\\
& N_i^-=\{ j\in \{1, \dots, \ell\}\,:\, V_+(K_j,K_i)> V_-(K_j,K_i)
\}\,.
\end{align*}
It is simple to check that there exists $a \in \left\{0,\dots,
l-1\right\}$ such that
\begin{align*}
& N_i^+=\left\{i-m\,:\, \ 0\leq m\leq a\right\}\,,\\
& N_i^-=\left\{i+m\,:\,  1,\leq m\leq l-a-1\right\}\,,
\end{align*}
One could ask if the index $J$ of Lemma \ref{cuore} can be
characterized as $J=i+l-a-1$ or $J=i+l-a$.  It is easy to check that
this simple characterization cannot hold by drawing suitable
functions $S$.
\end{Rem}

In what follows, in order to make the discussion more intuitive, it
is convenient to use   geometric arguments. To this aim we fix some
language. We call {\sl slope} of the function $S$ its graph
restricted to intervals $I$ of the form $I=[\bar x_i, \bar a_i]$ or
$I=[\bar a_i, \bar x_{i+1} ]$, where $\bar x_i, \bar x_{i+1}, \bar
a_i$ are points in $\bbR$ such that $\p (\bar x _i)\in K_i, \p(\bar
x_{i+1})\in K_{i+1}, \p(\bar a_i)\in A_i$ and $\bar a_i-\bar x_i
<1$, $\bar x_{i+1} -\bar a_i<1$. If $I=[\bar x_i, \bar a_i]$, then
the slope is increasing and its height is set equal to $S(\bar a_i)-
S(\bar x_i)$; if $I=[\bar a_i, \bar x_{i+1} ]$, then the slope is
decreasing and its height is set equals to  $S(\bar a_i)- S(\bar
x_{i+1})$.

\begin{Le}\label{dondolo}
Identity \eqref{pisolo} holds for all $x \in K_i$, $1\leq i \leq
\ell$, $\ell \geq 1$. \end{Le}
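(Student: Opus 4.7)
My plan is to combine Lemma~\ref{cuore} with a direct signed-area bookkeeping under the graph of $F$. Fix $x\in K_i$ and a lift $\bar x\in\bbR$ with $\p(\bar x)=x$. When $\ell=1$, $W(K_1)=0$ by definition and $A_1$ is the unique unstable component; since $F\geq 0$ on the clockwise arc from $K_1$ to $A_1$ and $F\leq 0$ from $A_1$ to the next lift of $K_1$, the maximum of $S(y)-S(\bar x)$ on $[\bar x,\bar x+1]$ is attained at the lift $\bar a_1\in [\bar x,\bar x+1]$ of $A_1$, and equals $\int_{\bar x}^{\bar a_1}F_+(s)\,ds=\int_0^1 F_+(s)\,ds$ because $F_+=0$ on $[\bar a_1,\bar x+1]$. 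For $\ell\geq 2$, I would invoke Lemma~\ref{cuore} to select $J\in\{1,\dots,\ell\}$ with
\[
W(K_i)=V_-(K_J,K_i)+V_+(K_{J+1},K_i),\qquad \max_{y\in[\bar x,\bar x+1]}\bigl(S(y)-S(\bar x)\bigr)=\D S(K_i,A_J)=S(\bar a_J)-S(\bar x),
\]
where $\bar a_J\in[\bar x,\bar x+1]$ is a lift of a point of $A_J$.

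Next, I would choose lifts ordered as $\bar x\leq \bar x_J\leq \bar a_J\leq \bar x_{J+1}\leq \bar x+1$ of points in $K_J$, $A_J$, $K_{J+1}$ respectively, with the natural conventions $\bar x_J=\bar x$ if $J=i$ and $\bar x_{J+1}=\bar x+1$ if $J+1\equiv i\pmod{\ell}$. Using the identity $\g^-_{u,v}=\g^+_{v,u}$, the anticlockwise arc $\g^-_{K_J,K_i}$ is described by the real interval $[\bar x,\bar x_J]$ traversed clockwise, while $\g^+_{K_{J+1},K_i}$ corresponds to $[\bar x_{J+1},\bar x+1]$, giving
\[
V_-(K_J,K_i)=\int_{\bar x}^{\bar x_J}F_-(s)\,ds,\qquad V_+(K_{J+1},K_i)=\int_{\bar x_{J+1}}^{\bar x+1}F_+(s)\,ds.
\]

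Finally, I would split $S(\bar a_J)-S(\bar x)=\int_{\bar x}^{\bar x_J}F\,ds+\int_{\bar x_J}^{\bar a_J}F\,ds$ and use a sign analysis: since no stable nor totally unstable zero-component of $F$ lies strictly between $K_J$ and $A_J$ (clockwise), continuity of $F$ forces $F\geq 0$ on $[\bar x_J,\bar a_J]$; symmetrically $F\leq 0$ on $[\bar a_J,\bar x_{J+1}]$. Hence $\int_{\bar x_J}^{\bar a_J}F=\int_{\bar x_J}^{\bar a_J}F_+$ and $\int_{\bar a_J}^{\bar x_{J+1}}F_+=0$. Writing $F=F_+-F_-$ on $[\bar x,\bar x_J]$ and summing $W(K_i)+(S(\bar a_J)-S(\bar x))$, the two $\int_{\bar x}^{\bar x_J}F_-$ contributions cancel, and the remaining $F_+$ pieces telescope into
\[
\int_{\bar x}^{\bar x_J}F_+ + \int_{\bar x_J}^{\bar a_J}F_+ + \int_{\bar a_J}^{\bar x_{J+1}}F_+ + \int_{\bar x_{J+1}}^{\bar x+1}F_+ = \int_{\bar x}^{\bar x+1}F_+(s)\,ds=\int_0^1 F_+(s)\,ds
\]
by periodicity, establishing \eqref{pisolo}. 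The main obstacle is the sign bookkeeping: one must correctly match the anticlockwise/clockwise arcs to real intervals, and confirm that $F$ cannot flip sign through a ``semi-stable'' zero component lying strictly between $K_J$ and $A_J$ (or between $A_J$ and $K_{J+1}$), which is ruled out by the exhaustive classification of sign-changing components into stable and totally unstable ones.
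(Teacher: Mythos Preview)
Your proof is correct and follows essentially the same approach as the paper's: both invoke Lemma~\ref{cuore} to write $W(K_i)=V_-(K_J,K_i)+V_+(K_{J+1},K_i)$ and $\max_{y\in[\bar x,\bar x+1]}(S(y)-S(\bar x))=\D S(K_i,A_J)$, and then perform a signed-area cancellation to arrive at $\int_0^1 F_+$. The only presentational difference is that the paper breaks the interval $[\bar x,\bar x+1]$ into all the monotone ``slopes'' of $S$ and tracks their heights, whereas you work directly with the integrals of $F_\pm$ and only need the sign of $F$ on the two subintervals adjacent to $A_J$; your bookkeeping is slightly leaner but the argument is the same.
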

\begin{proof}
If $\ell=1$, then the thesis is trivial. Suppose that $\ell \geq 2$ and see Figure \ref{martin1}.
Without loss assume that $i=1$. Fix points $\bar x_1<\bar a_1< \bar
x_2 < \cdots < \bar x_\ell<\bar a_\ell<\bar x_1+1$ such that $\p(\bar
x_j)\in K_j$ and  $\p(\bar a_j)\in A_j $.
Take $J$ as in Lemma \ref{cuore} such that $W(K_1)= t( 1,J)$.

\begin{figure}[!ht]
    \begin{center}
       \psfrag{a}[l][l]{$\bar x_1$}
      \psfrag{b}[l][l]{$\bar a_1$}
       \psfrag{c}[l][l]{$\bar x_2$}
 \psfrag{d}[l][l]{$\bar a_2$}
  \psfrag{e}[l][l]{$\bar x_3$}
 \psfrag{f}[l][l]{$\bar a_3$}
 \psfrag{g}[l][l]{$\bar x_4$}
 \psfrag{h}[l][l]{$\bar a_4$}
 \psfrag{i}[l][l]{$\bar x_1 +1 $}
 \psfrag{x}[l][l]{$+$}
\psfrag{x1}[l][l]{$-$}
\psfrag{x2}[l][l]{$+$}
 \psfrag{x3}[l][l]{$-$}
 \psfrag{x4}[l][l]{$+$}
\psfrag{a1}[l][l]{$+$}
\psfrag{a2}[l][l]{$+$}
 \psfrag{y}[l][l]{$+$}
 \includegraphics[width=12cm]{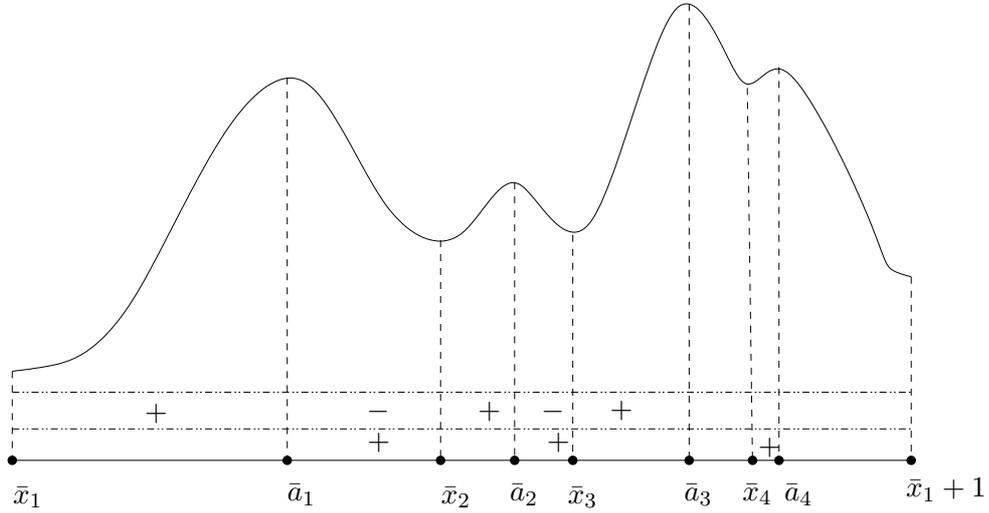}
      \caption{Proof of Lemma \ref{dondolo} with $i=1$, $J=3$, $\ell=4$. The first line of signs illustrates   how to sum slope heights to get $\max _{y\in [\bar x_1, \bar x_1+1]}\bigl( S(y)
-S(\bar x_1) \bigr)$. The second line of signs   illustrates   how to sum slope heights to get $W(K_1)$.}
    \label{martin1}
    \end{center}
 \end{figure}

Then $\max _{y\in [x_1, x_1+1]} \bigl(S(y)- S(x_1) \bigr)$, by
\eqref{netti} in Lemma \ref{cuore}, equals the sum of the heights of
the slopes associated to  $[\bar x_1, \bar a_1], \;[\bar a_1, \bar
x_2],\; \dots, [\bar x_2, \bar a_2],\dots, [\bar x_J, \bar a_J]$
with alternating signs $+,-,+,\dots,+$. On the other hand, by Lemma
\ref{cuore} again, $W(K_1)$ equals the sum of the heights of the
slopes associated to $[\bar a_1, \bar x_2], \dots, [\bar a_{J-1},
\bar x_J]$ and $[\bar x_{J+1},\bar a_{J+1}], \dots, [\bar x_\ell,
\bar a_\ell]$. Hence, $W(K_1)+\max _{y\in [x_1, x_1+1]} \bigl(S(y)-
S(x_1) \bigr)$ simply equals the sum of the heights of the
increasing slopes in $[\bar x_1, \bar x_1 +1]$, i.e. $\int _0^1
F_+(s)ds$.
\end{proof}

\begin{Le}\label{nebbia}
Suppose $\ell \geq 1$. Then, given $x\in \bbT$ and  $i \in
\{1,\dots, \ell\}$, it holds
\begin{equation}\label{mare}
W(K_i)+V_\pm (K_i,x) \geq \int_0^1 F_+(s)ds -\max_{y \in [x,x+1]}
\bigl(S(y)-S(x)\bigr) \,,
\end{equation}
where $V_{\pm}(K_i,x):=0$ if $x\in K_i$ and
$V_{\pm}(K_i,x):=V_{\pm}(x_i,x)$ for any $x_i\in K_i$  if $x\not \in
K_i$.
\end{Le}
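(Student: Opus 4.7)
My plan is to combine Lemma \ref{dondolo} with a short case analysis on where $S$ attains its maximum on $[\bar x_i,\bar x_i+1]$ relative to a chosen lift $\bar x$ of $x$. Writing $M^*:=\max_{y\in[\bar x_i,\bar x_i+1]}S(y)$ and $M(\bar x):=\max_{y\in[\bar x,\bar x+1]}S(y)$, Lemma \ref{dondolo} applied at $x_i$ gives $W(K_i)=\int_0^1 F_+(s)\,ds-(M^*-S(\bar x_i))$. Using the identity $S(\bar x)-S(\bar x_i)=\int_{\bar x_i}^{\bar x}(F_+-F_-)(s)\,ds$, the inequality \eqref{mare} in the $V_+$ case (with $\bar x\in[\bar x_i,\bar x_i+1)$) reduces to
\[
M(\bar x)+\int_{\bar x_i}^{\bar x} F_-(s)\,ds \;\geq\; M^*,
\]
while in the $V_-$ case (with $\bar x\in(\bar x_i-1,\bar x_i]$) it reduces to the companion inequality
\[
M(\bar x)+\int_{\bar x}^{\bar x_i} F_+(s)\,ds \;\geq\; M^*.
\]

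For the $V_+$ case I will split on whether some maximizer $y^*$ of $S$ on $[\bar x_i,\bar x_i+1]$ belongs to $[\bar x,\bar x_i+1]$. If yes, then $y^*\in[\bar x,\bar x+1]$ (since $\bar x_i+1\leq \bar x+1$), so $M(\bar x)\geq S(y^*)=M^*$ and the inequality is immediate. Otherwise every maximizer lies in $[\bar x_i,\bar x)$; fixing any such $y^*$ and using the trivial bound $M(\bar x)\geq S(\bar x)$, I estimate
\[
M^*-M(\bar x) \;\leq\; S(y^*)-S(\bar x) \;=\; \int_{y^*}^{\bar x} F_-(s)\,ds-\int_{y^*}^{\bar x} F_+(s)\,ds \;\leq\; \int_{\bar x_i}^{\bar x} F_-(s)\,ds.
\]

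The $V_-$ case is the mirror image, with $\bar x+1$ playing the role of $\bar x$: either some maximizer $y^*\in[\bar x_i,\bar x+1]\subset[\bar x,\bar x+1]$ gives $M(\bar x)\geq M^*$ directly, or every maximizer lies in $(\bar x+1,\bar x_i+1]$, in which case $M(\bar x)\geq S(\bar x+1)$ together with the periodicity $\int_{\bar x+1}^{\bar x_i+1}F_+=\int_{\bar x}^{\bar x_i}F_+$ yields
\[
M^*-M(\bar x) \;\leq\; S(y^*)-S(\bar x+1) \;\leq\; \int_{\bar x+1}^{\bar x_i+1} F_+(s)\,ds \;=\; \int_{\bar x}^{\bar x_i} F_+(s)\,ds.
\]
The only non-routine point is the choice of the witness in $[\bar x,\bar x+1]$---the endpoint $\bar x$ in the $V_+$ case and the endpoint $\bar x+1$ in the $V_-$ case---since this is what produces an integral on the correct sub-interval matching $V_+(K_i,x)$ or $V_-(K_i,x)$.
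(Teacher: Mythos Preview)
Your proof is correct and follows essentially the same route as the paper: both arguments invoke Lemma~\ref{dondolo} to rewrite $W(K_i)$, reduce \eqref{mare} to an inequality between the two maxima $M^*$ and $M(\bar x)$, and then split into two cases according to where the maximizer of $S$ on $[\bar x_i,\bar x_i+1]$ sits relative to $\bar x$ (or $\bar x+1$). The only cosmetic differences are that the paper keeps the bound in the form $V_+(x_i,x)\geq \max_{y\in[\bar x_i,\bar x]}(S(y)-S(\bar x_i))$ (its display \eqref{pecora}) before splitting, whereas you first cancel $\int F_+$ against $S(\bar x)-S(\bar x_i)$ to reduce to $M(\bar x)+\int_{\bar x_i}^{\bar x}F_-\geq M^*$; and you write out the $V_-$ case explicitly while the paper dismisses it as ``specular''.
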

\begin{proof}
If $x \in K_i$ we have nothing to prove due to Lemma \ref{dondolo}.
We assume $x \not \in K_i$ and we  fix $x_i \in K_i$, and also $\bar
x , \bar x _i \in \bbR$ such
 that $\p (\bar x )=x$, $\p( \bar x_i ) =x_i$ and $|\bar x -\bar
 x_i|<1$.
 Then, substituting $W(K_i)$ by means of Lemma
\ref{dondolo}, we get that \eqref{mare} reads
\begin{equation}\label{acqua}
 V_\pm (x_i,x) \geq  \max
_{y \in [\bar x_i,\bar x_i+1] }\bigl(S(y)-S(\bar x_i)\bigr)- \max
_{y \in [\bar x,\bar x+1]} \bigl(S(y)-S(\bar x)\bigr)\,.
\end{equation}
We give the proof for $V_+ (x_i,x)$. The other case is completely
similar (indeed, specular).
 We can
always choose $\bar x$ and $\bar x_i$ such that $\bar x_i <\bar x <
\bar x_i+1$. Then we can bound \begin{equation}\label{pecora}
 V_+
(x_i, x) =\int _{\bar x_i}^{\bar x} F_+(s) ds \geq \max _{y\in [\bar
x_i, \bar x ] }\int _{\bar x_i}^y F(s) ds = \max _{y\in [\bar
x_i,\bar x]} \bigl( S(y)-S(\bar x_i)\bigr)\,.
\end{equation}
Hence, to conclude it is enough to show that the last member in
\eqref{pecora}  bounds from above the r.h.s. of \eqref{acqua}. This
is equivalent to the inequality
$$
\max _{y\in [\bar x_i,\bar x]} S(y) \geq \max _{y \in [\bar x_i,\bar
x_i+1] }S(y)-\max _{y \in [\bar x,\bar x+1]}S(y) + S(\bar x)\,.$$ If
the l.h.s. equals the first term in the r.h.s., then the inequality
is obviously verified. Otherwise, it must be $\max _{y \in [\bar
x_i,\bar x_i+1] }S(y)<\max _{y \in [\bar x,\bar x+1]}S(y)$ and the
conclusion becomes trivial.
\end{proof}

\begin{Le}\label{bici_wander} Suppose $\ell \geq 1$.
Take $ x \in \bbT \setminus \bigl( \cup _{r=1}^\ell K_r\bigr)$. Take
$i\in \{1, \dots,\ell\}$ such that $K_i< x < K_{i+1}$.
Fix $x_i \in K_i$, $x_{i+1}\in K_{i+1}$ (if $\ell=1$ take
$x_i=x_{i+1}$). Then, fix $\bar x_i \in \bbR$ such that $x_i=\p(\bar
x_i)$, and afterwards fix $ \bar x, \bar x_{i+1}\in (\bar x_i, \bar
x_i +1]$ such that $x= \p (\bar x)$, $x_{i+1}= \p (\bar x_{i+1})$.

 Then exactly one of the
four cases (1),...,(4) mentioned in Theorem \ref{ruspabis} (ii) holds. Moreover,
in cases (1) and (4) it holds
\begin{equation}\label{torta1}
V_+(K_i,x)+  \max_{ y \in [\bar x, \bar x+1]} S(y)-S(\bar x) = \max
_{y\in [\bar x_i,\bar x_i +1]} S(y)-S(\bar x_i)\,,
\end{equation}
while in cases (2) and (3) it holds
\begin{equation}\label{torta2}
V_-(K_{i+1},x)+  \max_{ y \in [\bar x, \bar x+1]} S(y)-S(\bar x)=
\max _{y\in [\bar x_{i+1},\bar x_{i+1} +1]} S(y)-S(\bar x_{i+1}) \,.
\end{equation}
\end{Le}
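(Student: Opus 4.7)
The plan is to first establish the exhaustiveness of the four cases and then verify the integral identities \eqref{torta1}, \eqref{torta2} by decomposing the unit windows and exploiting monotonicity of $S$ on the two halves of the arc from $K_i$ to $K_{i+1}$. Since $x$ lies strictly between $K_i$ and $K_{i+1}$ and $A_i$ is the unique totally unstable component in this arc, we can find $\bar a_i\in(\bar x_i,\bar x_{i+1})$ with $\p(\bar a_i)\in A_i$; then either $\bar x\in(\bar x_i,\bar a_i]$, giving the location hypothesis of cases~(1)--(2), or $\bar x\in[\bar a_i,\bar x_{i+1})$, that of~(3)--(4). The max-comparison bisects each side further, giving four mutually exclusive alternatives. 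The overlap $\bar x=\bar a_i$ is harmless: since $S$ is constant on $A_i$, the two candidate formulas collapse.

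The geometric crux is the dichotomy: on the clockwise arc from $K_i$ to $A_i$ one has $F\geq 0$, while on the arc from $A_i$ to $K_{i+1}$ one has $F\leq 0$. Indeed, between $K_i$ and $A_i$ any intermediate connected component of $\{F=0\}$ can be neither stable nor totally unstable (by definition of the $K_j$ and by uniqueness of $A_i$), so $F$ preserves sign across it; together with $F>0$ just after $K_i$ and just before $A_i$, this forces $F\geq 0$ throughout, and the other half is symmetric. Consequently $S$ is non-decreasing on $[\bar x_i,\bar a_i]$ and non-increasing on $[\bar a_i,\bar x_{i+1}]$, and every local maximum of $S$ is attained on the periodic preimage of $\cup_j A_j$.

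In the easy cases~(1) and~(3) the relevant path integral does not cross $A_i$. In case~(1), $\bar x\in(\bar x_i,\bar a_i]$ yields $F_-\equiv 0$ on $[\bar x_i,\bar x]$, hence $V_+(K_i,x)=\int_{\bar x_i}^{\bar x}F\,ds=S(\bar x)-S(\bar x_i)$; plugging this into \eqref{torta1} reduces the identity to the max-equality defining case~(1). Case~(3) is symmetric: $\bar x\in[\bar a_i,\bar x_{i+1})$ gives $V_-(K_{i+1},x)=\int_{\bar x}^{\bar x_{i+1}}(-F)\,ds=S(\bar x)-S(\bar x_{i+1})$, and \eqref{torta2} collapses to the max-equality of case~(3).

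The main obstacle lies in the hard cases~(2) and~(4), where the path integral crosses $A_i$ and the reference maxima differ strictly. In case~(2), integrating across $\bar a_i$ gives $V_-(K_{i+1},x)=\int_{\bar a_i}^{\bar x_{i+1}}(-F)\,ds=S(\bar a_i)-S(\bar x_{i+1})$, so \eqref{torta2} reduces to
\[
\max_{[\bar x_{i+1},\bar x_{i+1}+1]}S-\max_{[\bar x,\bar x+1]}S=S(\bar a_i)-S(\bar x).
\]
Decomposing $[\bar x,\bar x+1]$ into its overlap with $[\bar x_i,\bar x_i+1]$ plus the gained slice $[\bar x_i+1,\bar x+1]$, the strict inequality $\max_{[\bar x,\bar x+1]}S>\max_{[\bar x_i,\bar x_i+1]}S$ forces the new maximum to sit in the gained slice; by the periodicity $S(y+1)=S(y)+S(1)$ and the monotonicity of $S$ on $[\bar x_i,\bar a_i]$, it equals $S(\bar x)+S(1)$ (and in particular $S(1)>0$). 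A parallel decomposition of $[\bar x_{i+1},\bar x_{i+1}+1]$, combined with the bound $\max_{j\neq i}S(\bar a_j)\leq S(\bar x)+S(1)\leq S(\bar a_i)+S(1)$ derived from the previous step, yields $\max_{[\bar x_{i+1},\bar x_{i+1}+1]}S=S(\bar a_i)+S(1)$, and subtraction gives the claim. Case~(4) is treated by the mirror argument: $V_+(K_i,x)=S(\bar a_i)-S(\bar x_i)$, while the strict inequality $\max_{[\bar x,\bar x+1]}S>\max_{[\bar x_{i+1},\bar x_{i+1}+1]}S$ now forces $\max_{[\bar x,\bar x+1]}S=S(\bar x)$ and $\max_{[\bar x_i,\bar x_i+1]}S=S(\bar a_i)$, completing \eqref{torta1}.
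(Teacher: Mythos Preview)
Your proof is correct and follows essentially the same route as the paper: compute $V_\pm$ explicitly using the sign of $F$ on the two half-arcs between $K_i$ and $K_{i+1}$, then identify the maxima of $S$ over the relevant unit windows (the paper simply asserts $\max_{[\bar x,\bar x+1]}S=S(\bar x+1)$ and $\max_{[\bar x_{i+1},\bar x_{i+1}+1]}S=S(\bar a_i+1)$ in case~(2) by reference to a figure, and calls case~(4) ``specular''). Your version spells out the monotonicity of $S$ and the window decompositions more carefully; the only loose point is the undefined symbol $\max_{j\neq i}S(\bar a_j)$ in case~(2), but the intended bound on the overlap $[\bar x_{i+1},\bar x+1]$ is clear and the argument is sound.
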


\begin{proof}
In  case (1) it holds
 $V_+(K_i,x)= S(\bar x)-S(\bar x_i)$ and the check of \eqref{torta1} is
 immediate (see Figure \ref{martin2}). Case (3) is similar.
We only need to treat case (2),  since case (4) is specular (take a
reflection at the origin). To this aim,  we fix $a_i \in (\bar x_i,
\bar x_i+1]$ such that $\p(a_i)\in A_i$. Then, due to the definition
of case (2) (see Figure \ref{martin2}), it must be
$ \max _{y \in [\bar x,\bar x+1]}S(y) =S(\bar x +1)$ and
$ \max _{y \in [\bar x_{i+1},\bar x_{i+1}+1]}S(y)=S(\bar a_i+1)$.
In particular, \eqref{torta2} is equivalent to
$$
V_-(K_{i+1},x)+ S(\bar x+1)- S(\bar x)= S(\bar a_i+1)- S(\bar
x_{i+1})\,.
$$
Since $S(\bar x+1)- S(\bar x)= S(\bar a_i+1)- S(\bar a_i)= S(1)$,
the above identity is equivalent to $V_-(K_{i+1},x)= S(\bar a_i)-
S(\bar x_{i+1} )$ which is trivially true (see Figure
\ref{martin2}).

\medskip

It remains now to prove that the above four cases are exhaustive.
Suppose for example that $x_i\leq x<A_i$. Then it is trivial to check that          it cannot be $\max _{y
\in [\bar x_i, \bar x_i+1] } S(y)> \max_{y \in [\bar x,\bar x+1]}
S(y) $.
\end{proof}

\begin{figure}[!ht]
    \begin{center}
       \psfrag{a}[l][l]{$\bar x_1$}
      \psfrag{b}[l][l]{$\bar a_1$}
       \psfrag{c}[l][l]{$\bar x_2$}
 \psfrag{d}[l][l]{$\bar a_2$}
  \psfrag{e}[l][l]{$\bar x_3$}
 \psfrag{f}[l][l]{$\bar a_3$}
 \psfrag{g}[l][l]{$\bar x_1+1$}
 \psfrag{h}[l][l]{$\bar a_1+1$}
 \psfrag{i}[l][l]{$\bar x_2 +1 $}
 \psfrag{z}[l][l]{$z$}
 \psfrag{y}[l][l]{$z+1$}
 \includegraphics[width=12cm]{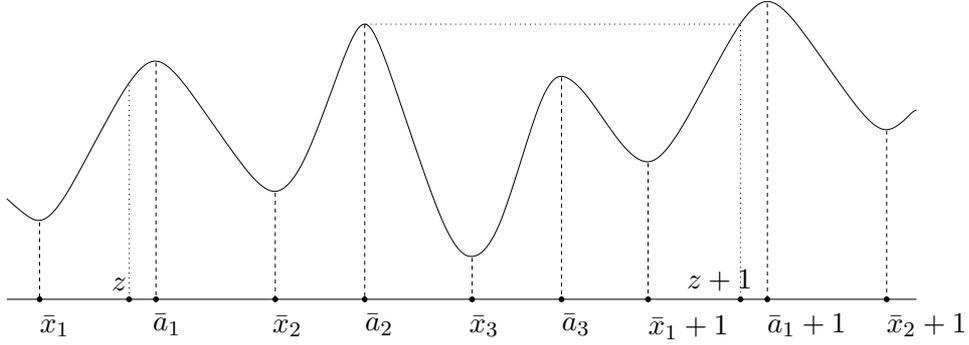}
      \caption{Proof of Lemma \ref{bici_wander} with $i=1$, $\ell=3$. If $\bar x_1 <\bar x\leq z$, then case (1) takes place.
If $z<\bar x< \bar a_1$, then
case (2) takes place.}
    \label{martin2}
    \end{center}
 \end{figure}

We can finally conclude:

\medskip

\noindent {\sl Proof of Proposition \ref{biancaneve} and Theorem
\ref{ruspabis} (ii) }. If $\ell=0$, then  by definition $W\equiv 0$
and we only need to prove Proposition \ref{biancaneve}. We take $
\bar x \in \bbR$ such that $\p(\bar x)= x$ and note that the
function $S$ is monotone. If it is weakly increasing, then
$$\max_{y\in [\bar x,\bar x+1]} \bigl(S(y)-S(\bar x)\bigr)= S(\bar x+1)- S(\bar
x)= \int _0^{1}F(s) ds = \int_0^1  F_+(s) ds $$  which coincides
with \eqref{pisolo}. Similarly, one gets \eqref{pisolo} if $S$ is
weakly decreasing.

\medskip
 Let us restrict now to $\ell\geq 1$.
 If $x \in \cup
_{i=1}^\ell K_i$, we only need to prove Proposition \ref{biancaneve}
and in this case the thesis coincides with Lemma \ref{dondolo}.
 Suppose that $x
\not \in \cup _{i=1}^\ell K_i$. Due to the definition of the
function $W$
 and  inequality
\eqref{mare} in Lemma \ref{nebbia},
$$ W(x) \geq \int_0 ^1 F_+ (s) ds - \max _{y \in [ x, x+1]}
\bigl( S(y)- S( x)\bigr) \,.$$ Moreover, if for some $j \in \{1,
\dots, \ell\}$ and some sign $\s \in \{-,+\}$ it holds
\begin{equation}\label{camillo}
 W(K_j)+ V_\s (K_j,x)=\int_0 ^1 F_+ (s) ds - \max _{y \in [  x, x+1]}
\bigl( S(y)-S( x)\bigr)\,,
\end{equation}
then it must be
\begin{equation}\label{camillo1} W(x)= W(K_j)+ V_\s
(K_j,x)=\int_0 ^1 F_+ (s) ds - \max _{y \in [ x, x+1]} \bigl(
S(y)-S( x)\bigr)\end{equation} and
\begin{equation}\label{camillo2}
V(K_j,x)= V_\s (K_j,x)\,.
\end{equation}
 By Lemma \ref{dondolo} we can rewrite $W(K_j)$ as
$$ W(K_j)= \int_0^1 F_+(s)ds - \max _{ y \in [ x_j,  x_j
+1]}\bigl( S(y) - S( x_j)\bigr) $$ where $x_j   \in K_j$. Hence
\eqref{camillo} reads
\begin{equation}\label{setoso}
V_\s(K_j,x) - \max _{ y \in [ x_j,  x_j +1]}\bigl( S(y) - S(
x_j)\bigr)=- \max _{y \in [  x, x+1]} \bigl( S(y)-S( x)\bigr)\,.
\end{equation}   By Lemma
\ref{bici_wander},  in order to fulfill \eqref{setoso}   it is enough to take $j=i$ and $\s=+$ in cases (1)
and (4) (see \eqref{torta1}), while it is enough to take $j=i+1$
and $\s=-$ in cases (2) and (3) (see \eqref{torta2}). Since as already observed
\eqref{camillo} implies both \eqref{camillo1} and \eqref{camillo2},
this concludes the proof of both Proposition \ref{biancaneve} and Theorem \ref{ruspabis} (ii). \qed

\bigskip
\noindent {\sl Proof of Theorem \ref{ruspa} (ii)}. As already
observed, part (ii) of Theorem \ref{ruspa} is an immediate
consequence of Proposition \ref{biancaneve}. \qed

\bigskip

\noindent {\sl Proof of Theorem \ref{ruspabis} (i)}.
If we take $J$ as in Theorem \ref{ruspabis}, by Lemma \ref{cuore} it
must be
\begin{multline}
W(K_i)= t(i,J)=\sum _{(m\rightarrow n)\in g_{i,J}}
\left[V_-(K_m,K_n) \chi (n=m-1)+ V_+ (K_m, K_n) \chi ( n=m+1)
\right]  \\ \geq \sum _{(m\rightarrow n)\in g_{i,J}} V(K_m,K_n) \geq
\min _{g \in G\{i\}}
 \sum
_{(m\rightarrow n)\in g} V(K_m,K_n)= W(K_i)\,.
\end{multline}
In particular all the inequalities in the above expression must be
equalities and this proves the first part of Theorem \ref{ruspabis}.

\section{Proof of Theorem \ref{ruspa} part (i)}\label{barbidu}

The periodicity of $\Phi$ follows by
\begin{eqnarray}
& & \Phi(x+1)=\min_{y\in[x+1,x+2]}(S(x+1)-S(y)
)=\min_{y\in[x+1,x+2]}-\int_{x+1}^yF(s)
ds \nonumber \\
& &=\min_{y\in[x+1,x+2]}-\int_{x}^{y-1}F(t)
dt=\min_{y\in[x,x+1]}-\int_{x}^{y}F(t) dt=\Phi(x)\,.
\end{eqnarray}
In the third equality we used the periodicity of $F$.

\smallskip

Next we show that $\Phi$ is Lipschitz. Due to the periodicity of
$\Phi$ it is enough to show it in $[0,1]$. The function $S$ is
Lipschitz with Lipschitz constant $K:=\max_{x\in \mathbb T}|F(x)|$.
Fix $x<y\in[0,1]$. Then we have
\begin{eqnarray}
& &|\Phi(x)-\Phi(y)|\leq |S(x)-S(y)|+\Big|\max_{z\in
[x,x+1]}S(z)-\max_{w\in [y,y+1]}S(w)\Big|\nonumber \\
& & \leq K|x-y| +\Big|\max_{z\in [x,x+1]}S(z)-\max_{w\in
[y,y+1]}S(w)\Big|\,.\label{lip}
\end{eqnarray}
We now estimate the second term in \eqref{lip}. If the maxima are
achieved respectively in $z^*$ and $w^*$ both belonging to
$[x,x+1]\cap [y,y+1]= [y,x+1]$, then necessarily $S(w^*)=S(z^*)$ and
the second term in \eqref{lip} is zero.
 Let us suppose that  $S(w^*)>S(z^*)$. Then
necessarily $w^*\in(x+1,y+1]$ and
$$
|S(w^*)-S(z^*)|=  S(x+1)+ S(w^*) -S(z^*)-S(x+1) \leq
S(w^*)-S(x+1)\leq K|x-y|\,.$$ The remaining case can be treated
similarly. Summarizing we have
$$
|\Phi(x)-\Phi(y)|\leq 2K|x-y|\,,
$$
that is $\Phi$ is Lipschitz with Lipschitz constant $2K$.
\smallskip

If $S$ is monotone, then trivially it holds  $\Phi (x)
=\min\left\{0,-S(1)\right\}$ for all $x \in \bbR$.
  When $S$ is periodic, $\max_{y\in [x,x+1]}S(y)$ does not depend on
$x$ and therefore it holds $ \Phi(x)=S(x)-\max_{y\in [0,1]}S(y)$.
Let us suppose now that $S$ is not monotone and it has not period one, i.e. that
  $S(1)=\int_0^1F(s)ds \neq 0$.
  Similarly to our definition on the torus $\bbT$, we say that
$[a,b]  \subset \bbR $ is a totally unstable connected component of
$\{\nabla S=0\}$ if $S$ is constant on $[a,b]$ and there exists
$\e>0$ such that $S(x) < S(a)$ for all $x \in [a-\e,a)  \cup
(b,b+\e]$. Due to the continuity of $F$, the totally unstable connected
components are countable and we enumerate them as $A_j$, $j \in J$.
Due to the assumption that $S$ is not monotone,  the index set $J$
is nonempty.  Below, we write $S(A_j)$ for the value $S(x)$ with $x
\in A_j$.

\smallskip
We write $\Phi (x) = S(x) - \max _{y \in [x,x+1]} S(y)$.  Since
$S(1)= S(x+1)-S(x) \not =0$, then the maximum of $S$ on $[x,x+1]$ is
achieved on
$$
\begin{cases}  \{ y \in [x,x+1]\,:\, y\in  A_j \text{ for some } j\} \cup \{x+1\} &
\text{ if } S(1)>0\,, \\  \{ y \in [x,x+1]\,:\, y\in  A_j \text{ for
some } j\} \cup \{x \} & \text{ if } S(1)<0\,.
\end{cases}
$$
Since $0=S(x)-S(x)$ and $-S(1)= S(x)- S(x+1) $, if we define as in
\eqref{urca}
\begin{equation}\label{cucu} U= \{ x \in \bbR\,:\, \Phi(x) =
S(x)-\max _{y \in [x,x+1]} S(y)\not = \min \{0,-S(1)\} \}\,,
\end{equation}  then for all $x \in U$  the maximum of $S$ on
$[x,x+1]$ is not achieved on $x$ or $x+1$ (one has to distinguish
the cases $S(1)>0$ and $S(1)<0$). Since  $\Phi$ is continuous, we
get that $U$ is an open subset of $\bbR$.

 We define the function
$\Psi$ as $\Psi(x)=\max _{y\in [x,x+1]} S(y)$. Then, $ \Phi(x)= S(x)
- \Psi (x)$. Due to the above observations,  when $x\in U$ we have
$$ \Psi (x)= \max  _{j: A_j \cap
[x,x+1] \not = \emptyset } S(A_j) \,.
$$
Since $\Psi(x)= S(x) -\Phi (x)$ is the sum of two continuous
functions, $\Psi$ is continuous.  We claim that $\Psi$ is constant
on every connected component of $U$. Indeed, consider $(a,b)$ a
connected component of $U$ and suppose there exist $x<y$ in $(a,b)$
such that $\Psi (x) \not = \Psi (y) $. Then the set $\{ \Psi(z)\,:\,
z \in [x,y]\}$ must contain the interval $ \bigl[ \Psi (x) \wedge
\Psi (y), \Psi (x) \lor \Psi (y) \bigr]$, in contradiction with the
fact that, when $x\in U$, $\Psi $ takes value in the countable set
$\{ S(A_j) \,:\, j \in J \} $. This concludes the proof of our
claim, which is equivalent to \eqref{gattone}. By definition of $U$,
one trivially gets \eqref{patroclo}.

\smallskip

Let us prove that $U$ is nonempty and that the interior of
$\bbR\setminus U$ is nonempty . We start with the second claim.
The function $\Phi$ is Lipschitz and consequently it is absolutely
continuous. This implies  that it is almost everywhere
differentiable, its derivative is Lebesgue locally integrable  and
moreover it holds
$$
\int_a^b\nabla \Phi(y)dy=\Phi(b)-\Phi(a)\,,
$$
for any $a,b \in \mathbb R$. From the previous analysis we know
that
  $\Phi$  is differentiable on $U$ where it holds  $\nabla \Phi=
\nabla S $.
Recall that $U$ is open and $\bbR\setminus U$ is closed. If  we
write $A$ for the interior part of $\bbR \setminus U$, then
$B:=(\mathbb R \setminus U)\setminus A$  consists of a countable set
of points and in particular has zero Lebesgue measure. If $A$ was
empty, since $B$ has zero Lebesgue measure, we would conclude that
$\bbR\setminus U= A\cup B$ has zero Lebesgue measure. In particular,
we would get
\begin{equation}
0=\Phi(1)-\Phi(0)=\int_0^1\nabla\Phi(y)dy=\\
\int_{[0,1]\cap U}\nabla S(y)dy=\int _{[0,1]} \nabla S(y) dy = S(1)
\label{cancan}
\end{equation}
in contradiction with the fact that $S(1) \not = 0$.

\smallskip

Let us  show that $U$ is also nonempty. We discuss the case
$S(1)>0$. The case $S(1)<0$ can be treated by similar arguments.
Since $S$ is not monotone, it must have local minima. We claim that
$x \in U$ whenever $x$ is a local minimum point for $S$. Indeed,
since $S(y+1)= S(y)+ S(1)$ for all $y \in \bbR$, also $x+1$ is a
local minimum point for $S$. Since $S$ is not flat (otherwise it
would be monotone), there must be a point $z \in (x,x+1)$ such that
$S(z)> S(x+1)$. This implies that $\max _{ y\in [x,x+1] } S(y) >
S(x+1)$, which is equivalent to $x \in U$, since for $S(1)>0$ the
definition of $U$ reads
$$U= \{ x \in \bbR\,:\, \Phi(x) = S(x)-\max _{y \in [x,x+1]} S(y)\not = S(x)-S(x+1)\}  \}\,. $$

\smallskip

Let us finally prove \eqref{ciampi}, where  only the second identity
is non trivial. Due to the definition \eqref{cucu} of $U$, it must
be $\Phi(x) < \min \{0, -S(1)\}$ for all $x \in U$. On the other
hand, due to \eqref{patroclo}, $\Phi (x) =\min \{0, -S(1)\}$ for all
$x$ in the nonempty set $ \bbR \setminus U$. These considerations
trivially imply \eqref{ciampi}.

\section{Proof of Theorem \ref{genteo}}\label{urlone}

The proof of Theorem \ref{genteo} is based on the following fact:
\begin{Le}\label{grufalo}
 Consider a function
$\varphi:\mathbb T\to \mathbb R$ that satisfies the following
properties:
\begin{itemize}

\item[a)] It is continuous.

\item[b)] There exists an open subset $O\subseteq \mathbb T$ where
it is differentiable and moreover $\nabla\varphi(x)=F(x)$ for any
$x\in O$.

\item[c)] On every connected component of $\mathbb T\setminus O$ it is constant.

\item[d)] Calling $(o^-_j,o^+_j)$, $j\in J$ the countable disjoint maximal
connected components of $O$, it holds $F(o^-_j)\leq 0$ and
$F(o^+_j)\geq 0$ for any $j\in J$.
\end{itemize}
Then,  $\varphi$ is a viscosity solution of \eqref{HJgen} for any
Hamiltonian $H$ satisfying hypotheses (A) and (B) of Theorem
\ref{genteo}.

\end{Le}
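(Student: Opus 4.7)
The plan is to verify the two viscosity inequalities \eqref{anno1} and \eqref{anno2} at every $x\in\bbT$, by splitting into cases according to how $x$ sits relative to $O$, computing $D^{\pm}\varphi(x)$ explicitly in each case, and then appealing to hypotheses (A) and (B) on $H$.

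In the easy cases $\varphi$ is differentiable at $x$ and $D^{+}\varphi(x)=D^{-}\varphi(x)=\{\nabla\varphi(x)\}$. If $x\in O$, property (b) gives $\nabla\varphi(x)=F(x)$, and by (B) both inequalities hold with equality since $H(x,F(x))=0$. If $x$ lies in the interior of a positive-length component of $\bbT\setminus O$, property (c) gives $\varphi$ locally constant, $\nabla\varphi(x)=0$, and $H(x,0)=0$ by (B) again. A singleton component $\{x\}$ of $\bbT\setminus O$ bounded on both sides by $O$-components reduces by continuity of $F$ to the differentiable case with $\nabla\varphi(x)=F(x)$.

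The heart of the argument lies at the boundary points $x=o_j^{\pm}$. Consider $x=o_j^{-}$ with a positive-length flat component of $\bbT\setminus O$ immediately to its left (the specular situation for $o_j^{+}$ is identical). Using (b) and (c), the one-sided incremental ratios $(\varphi(y)-\varphi(x))/(y-x)$ converge to $F(x)$ from the right and to $0$ from the left, so plugging into \eqref{sveglia1}--\eqref{sveglia2} gives
\begin{equation*}
D^{+}\varphi(x)=\{p\in\bbR:F(x)\leq p\leq 0\},\qquad D^{-}\varphi(x)=\{p\in\bbR:0\leq p\leq F(x)\}.
\end{equation*}
Hypothesis (d) enters decisively: the sign $F(o_j^{-})\leq 0$ makes $D^{+}\varphi(x)=[F(x),0]$ a genuine interval while forcing $D^{-}\varphi(x)$ to be either empty (when $F(x)<0$) or the single point $\{0\}$ (when $F(x)=0$). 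For the subsolution inequality \eqref{anno2}, convexity of $p\mapsto H(x,p)$ from (A) together with the two vanishing values $H(x,0)=H(x,F(x))=0$ from (B) yields $H(x,p)\leq 0$ throughout $[F(x),0]$. The supersolution inequality \eqref{anno1} is either vacuous or reduces to $H(x,0)=0$, and one argues symmetrically at $o_j^{+}$ using $F(o_j^{+})\geq 0$.

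The main obstacle, and the reason condition (d) is built into the hypotheses, is the asymmetry between the two viscosity conditions at boundary points: convexity of $H$ together with (B) controls $H$ on the whole superdifferential interval, but this bound goes the wrong way for the supersolution inequality, so one cannot afford a nontrivial subdifferential there. It is precisely (d) that keeps $D^{-}\varphi$ trivial. A final bookkeeping item is to handle $x\in\bbT\setminus O$ that are accumulation points of the countably many endpoints $o_j^{\pm}$: at such $x$, $\varphi$ need not be differentiable, but continuity of $\varphi$ together with the uniform bound $|\nabla\varphi|\leq\|F\|_\infty$ on $O$ and the flat behaviour on $\bbT\setminus O$ pinch $D^{\pm}\varphi(x)$ into intervals whose extremes are controlled by nearby values of $F$, and the same convexity argument closes the verification.
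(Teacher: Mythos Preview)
Your approach matches the paper's: split according to where $x$ sits relative to $O$, compute $D^\pm\varphi(x)$, and verify \eqref{anno1}--\eqref{anno2} via (A) and (B). The interior cases and the ``isolated'' boundary case $x=o_j^-$ with a flat interval immediately to the left are handled correctly and agree with the paper's Cases~1--3; in particular you identify precisely why (d) is needed there (to keep $D^-\varphi(x)$ trivial so the supersolution inequality is vacuous or reduces to $H(x,0)=0$).

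The accumulation case, however, is not yet a proof. Saying that the uniform bound $|\nabla\varphi|\le\|F\|_\infty$ and the flat behaviour ``pinch $D^{\pm}\varphi(x)$ into intervals whose extremes are controlled by nearby values of $F$, and the same convexity argument closes the verification'' does not deliver what you need: convexity plus (B) only gives $H(x,p)\le 0$ on $[F(x)\wedge 0,\,F(x)\vee 0]$, which is the wrong direction for the supersolution inequality on a nontrivial $D^-\varphi(x)$, and a Lipschitz bound alone gives no control on the sign of $H$. The paper closes this by a genuine one-sided estimate (e.g.\ for $x=o_j^-$ an accumulation point with $F(x)<0$ it proves $\int_x^y F\ge \varphi(y)-\varphi(x)\ge 0$ on a left neighbourhood, forcing $D^-\varphi(x)\subseteq\{F(x)\}$ and $D^+\varphi(x)\subseteq[F(x),0]$). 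A shorter route you could take instead: if $x$ is an accumulation point of the endpoints, then both $o_{i_n}^-$ and $o_{i_n}^+$ of nearby components converge to $x$ (their lengths must shrink), so (d) together with continuity of $F$ force $F(x)=0$; then $|\varphi(y)-\varphi(x)|\le\int_{[x,y]}|F|=o(|y-x|)$ shows $\varphi$ is actually differentiable at $x$ with $\nabla\varphi(x)=0$, and $H(x,0)=0$ finishes.
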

\begin{proof}
We first compute the sub-- and superdifferential of $\varphi$ by
simply assuming (a),(b),(c) (assuming (d) would not change much, and
we prefer to make the computation only under (a),(b),(c) since more
instructive).

Trivially,   $\varphi$ is differentiable at $x$ if and only if
$D^+\varphi(x)=D^-\varphi(x)=\left\{a\right\}$ for some value $a \in
\bbR$, and in this case  it holds $a= \nabla \varphi (x)$. In
particular, for any $x\in O$ it must be
$D^+\varphi(x)=D^-\varphi(x)=\left\{F(x)\right\}$, and similarly for
any  $x$ in the interior part of $\mathbb T\setminus O$ it must be
$D^+\varphi(x)=D^-\varphi(x)=\left\{0\right\}$. The non trivial
cases come from $x=o^\pm _j$.

\noindent $\bullet$ If $x=o_j^-$, since
$\varphi(y)-\varphi(x)=\int_x^y F(z) dz$ for $y $ in a small right
neighborhood  of $x$, given $p \in \bbR$ it holds
\begin{equation}\label{grufalo1}
\lim_{y\downarrow
x}\frac{\varphi(y)-\varphi(x)-p(y-x)}{|y-x|}=F(x)-p\,.
\end{equation}
$\bullet$ If $x =o_j^+$  then, by the same argument, we get for any
$p\in \bbR$ that
\begin{equation}\label{grufalo2}
\lim_{y\uparrow
x}\frac{\varphi(y)-\varphi(x)-p(y-x)}{|y-x|}=-F(x)+p\,.
\end{equation}
$\bullet$  If $x=o_j^-$ is not an accumulation point of
$\partial O$, then $\varphi $ is constant on a  small left
neighborhood of $x$ and therefore
\begin{equation}\label{grufalo3}
\lim_{y\uparrow x}\frac{\varphi(y)-\varphi(x)-p(y-x)}{|y-x|}=p\,.
\end{equation}
$\bullet$ Similarly if  $x=o_j^+$ is not an accumulation point of
$\partial O$, then
\begin{equation}\label{grufalo4}
\lim_{y\downarrow x}\frac{\varphi(y)-\varphi(x)-p(y-x)}{|y-x|}=-p\,.
\end{equation}
$\bullet$  If $x=o_j^-$ is  an accumulation point of $\partial O$
and $F(o_j^-)>0$,
 then for any $y$ in  a
 small left neighborhood of $x$ it holds
\begin{equation}\label{protesta}
 0 \geq \varphi (y) -\varphi (x) \geq \int_x^y F(z) dz\,.
 \end{equation}
Indeed, by assumption $F(o_j ^-)>0$ and therefore $F$ is positive on
a small left neighborhood of $x$. Hence, on a small left
neighborhood $\varphi$ is non decreasing. Moreover,  a part a
countable set of points, $\varphi$ is differentiable and $\nabla
\varphi \leq \max \{0, F\}=F$. This leads to the second inequality
in \eqref{protesta}. As a consequence, it holds
\begin{equation}
p\geq \limsup_{y\uparrow
x}\frac{\varphi(y)-\varphi(x)-p(y-x)}{|y-x|}\geq \liminf_{y\uparrow
x}\frac{\varphi(y)-\varphi(x)-p(y-x)}{|y-x|}\geq -F(x)+p\,.
\label{notte2}
\end{equation}
$\bullet$ If $x=o_j^-$ is  an accumulation point of $\partial O$ and
 $F(o_j^-)<0$, by similar arguments  we get for any $y$ in   a small left  neighborhood of $x$  that $$
 \int _x ^y F(z)dz \geq \varphi (y)- \varphi (x) \geq 0\,,$$
and therefore
\begin{equation}
-F(x)+p\geq \limsup_{y\uparrow
x}\frac{\varphi(y)-\varphi(x)-p(y-x)}{|y-x|}\geq \liminf_{y\uparrow
x}\frac{\varphi(y)-\varphi(x)-p(y-x)}{|y-x|}\geq p\,.
\label{nottenotte2}
\end{equation}
$\bullet$
 If $x=o_j^-$ is  an accumulation point of $\partial O$ and
$F(o_j^-)=0$, we can proceed as follows.  On a left neighborhood of
$x$ minus a countable set of points, $\varphi$ is differentiable and
$\nabla \varphi (z) \in \{0, F(z)\}$. This implies that
$$
-\int_x^y|F(z)|dz\geq \varphi(y)-\varphi(x)\geq \int_x^y|F(z)|dz\,.
$$
Since $F(x)=0$, the above bounds trivially imply that
\begin{equation}
\lim _{y \uparrow x} \frac{\varphi(y)-\varphi(x)}{y-x}=0\,.
\label{bellanotte2}
\end{equation}
$\bullet$ Formulas similar to \eqref{notte2}, \eqref{nottenotte2}
and \eqref{bellanotte2} are valid if  $x=o_j^+$ is  an accumulation
point of $\partial O$, $F(o_j^+)>0$, $F(o_j^+)<0$ and $F(o_j^+)=0$
respectively.

\medskip

The above  computations allow us to treat the several possible
cases.
 \medskip

$\bullet$ Case 1: $x=o^-_j=o^+_i$ for some $i,j\in J$.  By
\eqref{grufalo1} and \eqref{grufalo2},  $\varphi$ is differentiable
at $x$ and moreover
$D^+\varphi(x)=D^-\varphi(x)=\left\{F(x)\right\}$.

$\bullet$ Case 2:  $x=o^-_j$ for some $j \in J$ and $x $ is not an
accumulation point of $\partial O$. We claim that
$$
\begin{cases}
 D^-\varphi(x)=[0,F(x)]\,, \;
 D^+\varphi(x)=\emptyset & \text{ if }F(x)>0\,,\\
 D^-\varphi(x)=\emptyset\,, \; D^+\varphi(x)=[F(x),0]\,, & \text{ if
}F(x)<0\,,\\
\nabla \varphi(x)= D^-\varphi(x)=D^+\varphi(x)=\left\{0\right\} &
\text{ if } F(x)=0\,. \end{cases}
$$
Indeed,
due  to \eqref{grufalo1} and \eqref{grufalo3} we conclude that $p\in
D^+\varphi(x)$
 if and only if $ F(x)-p \leq 0 $ and $p\leq 0$, while
$p\in D^-\varphi(x)$ if and only if $ F(x)-p \geq 0 $ and  $p\geq
0$. Then the claim follows by distinguishing on the  sign of $F(x)$.

$\bullet$ Case 3:  $x=o^+_j$ for some $j\in J$ and $x $ is not an
accumulation point of $\partial O$. We claim that
$$
\begin{cases}
 D^-\varphi(x)=\emptyset\,, \;
D^+\varphi(x)=[0,F(x)]\,,
  & \text{
if } F(x)>0\,,\\
 D^-\varphi(x)=[F(x),0]\,, \; D^+\varphi(x)=\emptyset & \text{ if } F(x)<0\,,\\
\nabla \varphi (x)=  D^-\varphi(x)=D^+\varphi(x)=\left\{0\right\} &
\text{ if } F(x)=0\,. \end{cases}
$$
Indeed, by \eqref{grufalo2} and \eqref{grufalo4} it holds: $p \in
D^+\varphi (x) $ if and only if  $-F(x)+p \leq 0 $ and $-p \leq 0$,
while $p\in D^- \varphi(x)$ if and only if $-F(x)+p\geq 0  $ and $-p
\geq 0$.

$\bullet$ Case 4: $x=o^-_j$ and  $x\in\partial O$ is an accumulation
point of $\partial O$. We claim that
$$ \begin{cases}
D^-\varphi(x)\subseteq [0,F(x)]\,,\; D^+\varphi(x)\subseteq
\left\{F(x)\right\} & \text{ if } F(x)>0\,,\\
 D^-\varphi(x)\subseteq \left\{F(x)\right\}\,,\;
D^+\varphi(x)\subseteq [F(x),0] & \text{ if } F(x)<0\,,\\
  D^- \varphi (x)= D^+ \varphi (x)=\{0\}  & \text{ if } F(x)=0\,.
\end{cases}
$$
Indeed, if  $F(x)>0$ then, by \eqref{grufalo1} and \eqref{notte2},
if $p \in D^+ \varphi (x)$ then $F(x)-p\leq 0 $ and $- F(x)+p \leq 0
$, while if $p \in D^- \varphi (x)$ then $F(x)-p \geq 0 $ and $p
\geq 0$. If $F(x)<0$, by \eqref{grufalo1} and \eqref{nottenotte2},
if $p\in D^+ \varphi (x)$ then $F(x)-p\leq 0$ and $ p \leq 0$, while
if $p \in D^- \varphi (x)$ then $F(x) -p \geq 0$ and $-F(x)+p \geq
0$.
 If
$F(x)=0$ then the thesis follows from \eqref{grufalo1} and
\eqref{bellanotte2}.

$\bullet$ Case 5: $x=o^+_j$ and  $x\in\partial O$ is an accumulation
point of $\partial O$. Similarly to Case 4,  one obtains
$$ \begin{cases}
D^-\varphi(x)\subseteq \left\{ F(x) \right\}\,,\;
D^+\varphi(x)\subseteq
[0,F(x)] & \text{ if } F(x)>0\,,\\
 D^-\varphi(x)\subseteq [F(x),0]\,,\;
D^+\varphi(x)\subseteq \left\{ F(x) \right\} & \text{ if } F(x)<0\,,\\
  D^- \varphi (x)= D^+ \varphi (x)=\{0\}  & \text{ if } F(x)=0\,.
\end{cases}
$$

\bigskip

We have now all the tools to   show that $\varphi$ is a viscosity
solution of \eqref{HJgen} at every $x\in \mathbb T$, adding
assumption (d).

First of all we consider a point $x\in \mathbb T$ such that
$F(x)=0$. In this case we proved that $\varphi$ is differentiable at
$x$ and moreover $\nabla \varphi(x)=0$. As  consequence we obtain
that the Hamilton-Jacobi equation is satisfied at $x$ due to the
fact that $D^+\varphi(x)=D^-\varphi(x)=\left\{0\right\}$ and
moreover $H(x,0)=0$ from the hypothesis (B).

We now consider the case $F(x)\neq 0$. If $\varphi$ is
differentiable at $x$ then either
$D^+\varphi(x)=D^-\varphi(x)=\left\{F(x)\right\}$ or
$D^+\varphi(x)=D^-\varphi(x)=\left\{0\right\}$. In both cases by
hypothesis (B) we have that $\varphi$ is a viscosity solution at
$x$. If $\varphi$ is not differentiable at $x$ as a direct
consequence of the previous results, the definitions and all the
assumptions (a), (b), (c) and (d), we have that $\varphi$ is a
viscosity solution at $x$ if the following implications holds:
\begin{equation}
p\in [F(x)\wedge 0,F(x)\vee 0] \qquad \Longrightarrow H(x,p)\leq
0\,, \label{doccia1}
\end{equation}
and
\begin{equation}
p\in (-\infty,F(x)\wedge 0]\cup [F(x)\vee 0,+\infty) \qquad
\Longrightarrow H(x,p)\geq 0\,. \label{doccia2}
\end{equation}
We next show that indeed \eqref{doccia1} and \eqref{doccia2} are
consequences of the hypotheses (A) and (B).

Let us consider first \eqref{doccia1}. Take $p\in [F(x)\wedge
0,F(x)\vee 0]$, then there exists $c\in [0,1]$ such that
$p=c0+(1-c)F(x)$. From hypotheses (A) and (B) we deduce immediately
$$
H(x,p)\leq cH(x,0)+(1-c)H(x,F(x))=0\,.
$$
We discuss now \eqref{doccia2}. Take for example the case $F(x)<0$
and $p\in (-\infty,F(x)]$. Consider an arbitrary $w\in(F(x),0)$ and
the corresponding $c\in(0,1]$ such that $F(x)=cp+(1-c)w$. From
hypotheses (A) and (B) we have
$$
0=H(x,F(x))\leq cH(x,p)+(1-c)H(x,w)\,,
$$
and from \eqref{doccia1} we deduce
$$
H(x,p)\geq\frac{c-1}{c}H(x,w)\geq 0\,.
$$
The remaining cases can be treated similarly.
\end{proof}
\medskip

We can finally conclude:

\medskip

\noindent {\sl Proof of  Theorem \ref{genteo}}. We only need to show
that the function $\Phi$ satisfies conditions $(a),(b),(c)$ and
$(d)$ of Lemma \ref{grufalo}.

The validity of conditions $(a),(b)$ and $(c)$ follows directly from
Theorem \ref{ruspa} with the identification $O=U$. Let us show that
also condition $(d)$ is satisfied.  To this aim, we consider
 a maximal connected component  $(u_i^-,u^+_i)$ of $U$. Then by
definition we have
$\Phi(u_i^-)=\Phi(u^+_i)=\min\left\{0,-S(1)\right\}$ and
$\Phi(u)<\min\left\{0,-S(1)\right\}$ for any $u\in (u_i^-,u^+_i)$.
In particular, for any $u\in (u_i^-,u^+_i)$ we can write
\begin{align}
& 0> \Phi(u)-\min\left\{0,-S(1)\right\}= \Phi (u)- \Phi(u_i^-) = \int_{u^-_i}^uF(z)dz\,,\\
&0> \Phi(u)-\min\left\{0,-S(1)\right\}=  \Phi (u)- \Phi(u_i^+)  =
\int_{u^+_i}^uF(z)dz\,.
\end{align}
Due to the continuity of $F$, the above expressions imply that
$F(u_i^-) \leq 0$ and $F(u_i^+) \geq 0$. \qed

\bigskip

\bigskip

 \noindent
{\bf  Acknowledgements}. The authors kindly thank L. Bertini, A. De
Sole,  G. Jona--Lasinio and E. Scoppola for useful discussions. A.F.
acknowledges  the financial support of  the European Research
Council through the ``Advanced Grant''  PTRELSS 228032. D.G.
acknowledges the financial support of PRIN 20078XYHYS$\underline{\
}$003 and thanks the Department of Physics of the University ``La
Sapienza" for the kind hospitality.

\vskip 0.5cm

\end{document}